\documentclass[10pt,final]{siamltex}
\usepackage{amsmath}
\usepackage{lineno} 

\usepackage{bm}
\usepackage{amssymb,version}
\usepackage{cases}
\usepackage{color}
\usepackage{verbatim}
\newtheorem{rem}{Remark}[section]

\usepackage{graphicx}
\usepackage{subfigure}
\usepackage{algorithm}
\usepackage{algpseudocode}
\usepackage{siunitx}
\usepackage{empheq}

\usepackage{hyperref}
\allowdisplaybreaks

\newcommand{\normmm}[1]{{\left\vert\kern-0.25ex\left\vert\kern-0.25ex\left\vert #1
    \right\vert\kern-0.25ex\right\vert\kern-0.25ex\right\vert}}

\begin{document}
    \title{
    Several Accelerated and Stable Pseudo-Energy-Dissipative Lagrange Multiplier Methods based on Second-order Flow and Variable Splitting
\thanks{This work is supported in part by the National Natural Science Foundation of China (Grant Nos.12271302, 12131014, 12301520), Shandong Provincial Natural Science Foundation for Outstanding Youth Scholar (Grant No. ZR2024JQ030), Taishan Scholars Climbing Program of Shandong (Grant No. TSPD20240802), and the Natural Science Foundation of
Shandong Province (Grant No. ZR2023QA033).}}

\author{
	Xiaoli Li \thanks{School of Mathematics and State Key Laboratory of Cryptography and Digital Economy Security, Shandong University, Jinan, Shandong, 250100, P.R. China. Email: xiaolimath@sdu.edu.cn}.
    \and Jianghua Liu \thanks{Corresponding Author. School of Mathematics, Shandong University, Jinan, Shandong, 250100, P.R. China. Email: 202411900@mail.sdu.edu.cn}.
    \and Zhiping Mao \thanks{School of Mathematical Sciences, Eastern Institute of Technology, Ningbo 315200, Zhejiang, China. Email: zmao@eitech.edu.cn}.
    \and Jingrong Wei \thanks{Department of Mathematics, The Chinese University of Hong Kong, Shatin, New Territories, Hong Kong, China. Email: jingronw@uci.edu}.
	\and Xuan Zhao \thanks{School of Mathematics, Shandong University, Jinan, Shandong, 250100, P.R. China. Email: zhaox@sdu.edu.cn}.      
}
\maketitle

\begin{abstract}
Based on second-order inertial dynamics, we develop two accelerated Lagrange multiplier (LM)-based optimization methods: Linear LM-based second-order flow method and linear LM-based variable and operator splitting method for unconstrained non-convex optimization problems. Relaxed and adaptive variants of both methods are proposed to prevent degeneration of the Lagrange multiplier and to enable automatic adjustment of the time step size. Within the novel second-order LM-based optimization framework, the associated pseudo-energy functional is shown to dissipate monotonically along the iterative trajectory, and convergence of the generated iterates to stationary points is rigorously established. Extensive numerical experiments on function optimization and partial differential equation benchmark problems, including applications as optimizers for physics-informed neural networks (PINNs) and deep operator networks (DeepONets), demonstrate that the proposed methods effectively escape local minima, achieve high accuracy, and exhibit strong robustness with respect to the choice of the initial learning rate during neural network training.


\end{abstract}

 \begin{keywords}
Second-order inertial flow; pseudo-energy stability; Lagrange multiplier; variable and operator splitting; neural network
 \end{keywords}
   \begin{AMS}
90C26, 37N40, 68T07, 65N35, 65M70
    \end{AMS}

\section{Introduction}
Optimization algorithms play a fundamental role in a wide range of scientific and engineering applications, including function approximation, image recognition, numerical solution of partial differential equations (PDEs) and machine learning. Most of these applications are related to the optimization of neural network (NN). NN-based approaches construct a global loss function to determine suitable networks parameters. Owing to the intrinsic complexity of the neural network, the resulting loss functions are typically highly non-convex, making the optimization process particularly challenging. 



To address such non-convex optimization problems, numerous optimization algorithms have been developed, including the gradient descent (GD) method \cite{cauchy1847methode}, the stochastic gradient descent (SGD) method \cite{robbins1951stochastic}, the Nesterov accelerated gradient (NAG) method \cite{JMLR:v17:15-084}, the adaptive gradients (Adagrad) method \cite{duchi2011adaptive}, the root mean square propagation (RMSprop) method \cite{hinton2012neural}, and the adaptive delta (AdaDelta) method \cite{article}. Among these approaches, the adaptive moment estimation (ADAM) method \cite{kingma2014adam} has become one of the most widely used adaptive optimization algorithms in deep learning practice.
However, these optimization algorithms share a common limitation: their stability and computational efficiency remain highly sensitive to the choice of the fixed or initial learning rate. In practical applications, considerable manual tuning is often required to achieve satisfactory convergence behavior. This leads to a fundamental trade-off: excessively large (initial) learning rates may cause training instability or even divergence, whereas overly small (initial) learning rates generally result in prohibitively high computational costs due to slow convergence. Consequently, the development of optimization algorithms with improved robustness and computational efficiency remains a central issue in deep learning methods.

Recently, Liu et al. \cite{shen2023} proposed a novel framework that reformulates optimization problems as the search for steady-state solutions of gradient flows from a continuous perspective. Based on this framework, they further developed efficient modified energy-stable algorithms using the scalar auxiliary variable (SAV) approach. Building upon this idea, Ma et al. \cite{ma2024efficient} introduced a hybrid discretization strategy that combines the smoothed particle method (SPM) for spatial discretization with SAV-based time-stepping schemes incorporating the adaptive mechanism of the ADAM method for temporal discretization. Numerical experiments demonstrated that this strategy achieves excellent performance in applications such as function approximation, image processing, and PDEs solving. However, our numerical investigations indicate that, without the strategy combined with the proposed PM, the convergence rate becomes slow and the solution accuracy deteriorates when solving PDEs. To address the difficulties encountered by SAV-based optimization algorithms in PDEs solving, Liu et al. \cite{LIU2026114750} further proposed two Lagrange multiplier (LM)-based subspace schemes for solving PDEs. Their approach first employs an LM-based optimization algorithm to construct suitable subspaces, followed by the least-squares method to determine the weights of the final subspace layer. The further SAV-based energy-stable algorithm also includes the vector auxiliary variables (VAV) \cite{doi:10.1137/23M1611087}. Although these methods achieve high accuracy in PDEs solving and exhibit robustness with respect to the (initial) learning rate, they are constructed on the first-order gradient flows, and the resulting schemes satisfy either the modified or original energy dissipation laws. From the perspective of continuous dynamics, the first-order flows may suffer from relatively slow convergence. From the viewpoint of discrete energy evolution, enforcing modified or original energy dissipation may increase the risk of the optimization process becoming trapped in local minima. Fortunately, the second-order flow model proposed in \cite{CHEN2023111872} may be able to resolve the above issues. The authors developed a series of numerical algorithms based on the proposed second-order hyperbolic dissipative PDEs systems. Although the computational complexity of these methods is comparable to that of the gradient-flow-type methods, the corresponding numerical results exhibit significantly improved performance and robustness.

Consider the unconstrained minimization problem:
\begin{equation}\label{eq1_1}
	\min_{x \in \mathbb{R}^N} f(x),
\end{equation}
where $\mathbb{R}^N$ denotes the $N$-dimensional Euclidean space equipped with the standard $l^{2}$ inner product $(\cdot , \cdot )$ (resp., norm $\|\cdot\|$), and $f:\mathbb{R}^{N}\to\mathbb{R}$ is differentiable. We assume that a solution $x^* \in \arg \min f(x)$ exists and denote $f^* = f(x^*) > - \infty$.

For the unconstrained minimization problem \eqref{eq1_1}, a widely studied continuous-time model for accelerated optimization is the second-order inertial gradient system:
\begin{equation}\label{eq1_2}
	\ddot{x} + \eta(t)\dot{x} = -\nabla f(x),
\end{equation}
where $\eta(t)>0$ denotes the damping coefficient, $\nabla f$ denotes the gradient of $f$, and $\dot{x} = \dot{x}(t), \ddot{x} = \ddot{x}(t)$ denotes the first- and second-order derivative with respect to $t$, respectively. This class of second-order dissipative dynamical systems has been extensively studied in optimization; see, e.g., \cite{attouch2023fast,attouch2018fast,article3,JMLR:v17:15-084,chen2025second} and the references therein.
When $\eta(t) \equiv \eta$ is a fixed constant, \eqref{eq1_2} corresponds to the Heavy Ball with friction system introduced by Polyak \cite{POLYAK19641} and a convergence rate of $O(\frac{1}{t})$ is shown in \cite{7330562}. When $\eta(t)=\frac{3}{t}$, \eqref{eq1_2} corresponds to the continuous ODE model associated with NAG method and the convergence rate was improved to $O(\frac{1}{t^2})$  \cite{JMLR:v17:15-084}.

When solving optimization problems, the first-order and second-order flow models exhibit distinct energy dissipation behaviours. Specifically, the first-order gradient flows are characterized by the monotonic dissipation of the energy function $f(x)$, whereas the second-order flow \eqref{eq1_2} dissipates the following pseudo-energy through Lyapunov analysis \cite{attouch2018fast,CHEN2023111872,chen2025second}:
\begin{equation*}
	\mathcal{E}(x) = f(x) + \frac{1}{2}\left\|\dot{x}\right\|^2.
\end{equation*}
Since non-convex optimization problems typically have multiple local minima and an extremely complex solution landscape, the monotonic decrease of the energy function $f$ may hinder optimization algorithms from escaping undesirable local minima. By contrast, the inertial term contained in the pseudo-energy function introduces additional kinetic information, which facilitates escaping from local minima and potentially improves optimization performance.



Motivated by the aforementioned advantages of the second-order flow, we apply the corresponding second-order flow model to non-convex optimization problems arising in neural networks with complex structures. Based on this framework, we develop several accelerated, efficient, and stable algorithms for solving PDEs. The main contributions are as follows:
\begin{itemize}
    \item \textbf{Linear LM-based second-order flow (LLM-SOF) method.}
    We propose a novel linear Lagrange multiplier (LM)-based second-order flow
    method that preserves a pseudo-energy dissipation law. The proposed method enjoys several desirable properties: it is
    linear (requiring only one linear system to solve at each iteration), fully
    decoupled, unconditionally energy stable, and maintains strict positivity
    of the Lagrange multiplier throughout the iteration process. We rigorously prove the convergence to stationary points for the generated iterative sequence.
    \item \textbf{Linear LM-based variable and operator splitting (LLM-VOS) method with accelerated convergence.}
Motivated by the work of Chen et al. \cite{chen2025acceleratedgradientmethodsvariable}, we reformulate the second-order flow into a variable and operator splitting (VOS) flow and develop the corresponding LM-based VOS method, which preserves the pseudo-energy dissipation law while significantly accelerating convergence. The VOS formulation shares the same desirable properties, linear, fully decoupled, unconditionally energy stable, and strict positivity of the Lagrange multiplier, and enables more efficient computation and improved convergence compared to the standard second-order flow.
    \item \textbf{Relaxed and adaptive variants with robustness.}
    To handle the complexity of non-convex optimization problems and enhance convergence, we develop relaxed and adaptive variants of both the LLM-SOF and LLM-VOS methods while preserving the pseudo-energy dissipation property. The relaxed version prevents the Lagrange multiplier from becoming too small, ensuring numerical stability and avoiding slow convergence, while the adaptive version
    allows for automatic adjustment of the time step size.
\end{itemize}

We apply the proposed approaches to several representative non-convex optimization problems, including the Rosenbrock function, the Rastrigin function, PINNs for solving the Poisson equation, and DeepONets for solving the Burgers' equation. Numerical experiments demonstrate that the proposed methods achieve relatively fast convergence while maintaining strong robustness and high accuracy, even under large initial learning rates. In contrast, those first-order flow-based methods as well as the ADAM method either converge slowly or become unstable, leading to training failure or significant numerical errors.

The remainder of this paper is organized as follows. In Section 2, we develop two LM-based methods inspired by accelerated gradient flows and introduce their relaxed and adaptive variants. In Section 3, we establish convergence results for the proposed algorithms. In Section 4, we present numerical experiments to demonstrate their accuracy, efficiency, and robustness. Concluding remarks are given in Section 5.

\section{Linear LM-based second-order flow and methods}\label{section2}
In this section, we propose several efficient, robust and pseudo-energy stable LM-based schemes, and develop relaxed and adaptive variants.

\subsection{Linear LM-based second-order flow (LLM-SOF) algorithm}\label{section2_1}

Introducing the velocity variable $v=\dot{x}$, the second-order inertial gradient flow \eqref{eq1_2} can be written as the first-order ODE system:
\begin{equation}\label{eq2_1}
	\begin{cases} 
    \dot{v} = -\nabla f(x) - \eta(t)v, \\
    \dot{x} = v.
	\end{cases}
\end{equation}

Define the pseudo-energy
\begin{equation*}\label{eq2_3}
	\mathcal{E}(x, v) := f(x) + \frac{1}{2}\|v\|^2.
\end{equation*}
Throughout the paper, we assume that $\mathcal E > 0$ without loss of generality since adding a sufficiently large constant $C > -f^*$ to $f$ does not change the optimization problem or its gradient. The minimizers of $\mathcal{E}(x, v)$ are related to the minimizers of $f(x)$:
$$
 \mathcal{E}(x^*, v^*) \leq \mathcal{E}(x, v), \quad \forall~ x, v \in \mathbb{R}^n, 
$$
with $x^* \in \arg \min f(x)$ and $v^* = 0$.

The ODE system \eqref{eq2_1} is dissipative with respect to the pseudo-energy in the sense that
\begin{align}\label{eq2_2}
	\frac{d\mathcal{E}}{dt} = -\eta(t) \|v\|^2\leq0.
\end{align}

We propose the linear LM-based second-order inertial gradient flow (LLM-SOF):
\begin{subequations}\label{eq2_4}
\begin{empheq}[left=\empheqlbrace]{align}
   \dot{v}  &= - \xi(t) \nabla f(x) - S(\mathcal{L}x - \mathcal{L}x) -\eta(t)v, \\
    \dot{x}  &= v, \\
		\frac{d\mathcal{E}}{dt} &= -\eta(t)\|v\|^2, \label{eq_LMdampedflow_c}
\end{empheq}
\end{subequations}
where $\xi(t)$ denotes the global Lagrange multiplier, $\mathcal{L}$ is a linear semi positive-definite operator and $S$ is a positive constant. The term $S(\mathcal Lx-\mathcal Lx)$ vanishes at the continuous level, but its two copies will be treated at different time levels in the discrete algorithm. Noted that \eqref{eq_LMdampedflow_c} implies 
$$
(1 -\xi(t)) (v(t), \nabla f(x(t))) = 0, \quad \forall~ t > 0. 
$$
The LLM-SOF \eqref{eq2_4} generalized the second-order inertial gradient flow by imposing $\xi(t) = 1$ whenever $(v(t), \nabla f(x(t))) \neq 0$ and preserved the energy decay. If $\xi(t) \equiv 1$, LLM-SOF \eqref{eq2_4} reduced to the second-order inertial gradient flow \eqref{eq2_1}. 

Discretizing \eqref{eq2_4} with a fixed time step $\Delta t>0$, we propose the following linear LM-based second-order-flow (LLM-SOF) scheme:
\begin{subequations}\label{eq2_5}
\begin{empheq}[left=\empheqlbrace]{align}
&\frac{v^{k+1} - \frac{\xi^{k+1}}{\xi^{k}}v^k}{\Delta t} = - \xi^{k+1}\nabla f(x^k) - S(\mathcal{L}(x^{k+1} - x^k)) -\eta^{k+1}v^{k+1}, \label{eq2_5a}\\
&\frac{x^{k+1} - x^k}{\Delta t} = v^{k+1},\label{eq2_5b}\\
&\frac{\xi^{k+1}\mathcal{E}^k - \xi^k\mathcal{E}^{k-1}}{\Delta t} = -\eta^{k+1}\left\|v^{k+1}\right\|^2,\label{eq2_5c}
\end{empheq}
\end{subequations}
with $\mathcal E^k:= \mathcal E(x^k, v^k)$ and initial values $\xi^0=1,x^{-1}=x^0$. The scheme \eqref{eq2_5} is an explicit-implicit discretizaton of the LLM-SOF \eqref{eq2_4}. To facilitate the stability and energy decay of the algorithms, we incorporate multiplier terms in the discrete algorithm. In \eqref{eq2_5a}, $v^k$ is multiplied by the ratio of the multiplier $\frac{\xi^{k+1}}{\xi^{k}}$. In \eqref{eq2_5c}, the energy decay in the discrete level is modified to $\tilde{\mathcal{E}}(x^k, v^k, \xi^{k+1}) = \xi^{k+1}\mathcal{E}(x^k, v^k)$.

\begin{rem}
    By taking $\Delta t \to 0$, the corresponding ODE model of the LLM-SOF scheme \eqref{eq2_5} can be formulated as 
    \begin{equation}\label{eqLMODE-2}
    \left \{ \begin{aligned}
     &    \dot{v} - \dot{\xi} \frac{v}{\xi}  =  -\xi \nabla f - S(\mathcal{L}x- \mathcal{L}x)  -\eta(t) v, \\
   & \dot{x} = v,\\
   & \frac{d (\xi \mathcal{E})}{d t} = -\eta(t) \left\|v\right\|^2,
    \end{aligned} \right.
    \end{equation}
provided $\xi(t) \neq 0, \forall~ t \geq 0$. With algebraic computation, \eqref{eqLMODE-2} can be understood as an second-order inertial gradient flow with a time-dependent Lagrange multiplier 
$$
\dot{\xi} = - \frac{1}{ \mathcal E + \|v\|^2} (1-\xi)[\eta \|v\|^2 +\xi(\nabla f(x), v)].
$$
A trivial dynamic for $\xi(t)$ is choosing $\xi \equiv 1$, i.e.,  $\dot{\xi} \equiv 0$,  then \eqref{eqLMODE-2} reduces to the second-order inertial gradient flow \eqref{eq2_1}. As our main focus is to design discrete algorithms, we leave the study of the ODE system to interested readers.
\end{rem}

The implementation of the LLM-SOF scheme \eqref{eq2_5} requires one solve of a linear system involving $\mathcal L$ and algebraic manipulations. Using \eqref{eq2_5a} and \eqref{eq2_5b}, we can set
\begin{equation*}\label{eq2_6}
	x^{k+1}=x^k+\Delta t\xi^{k+1}z^{k+1},
\end{equation*}
where the auxiliary variable $z^{k+1} := v^{k+1}/\xi^{k+1}$ is given by the following
\begin{equation}\label{eq2_7}
	\left (\frac{I}{\Delta t}+S\Delta t\mathcal{L}+\eta^{k+1} I \right )z^{k+1}=\frac{v^k}{\Delta t \xi^{k}}-\nabla f(x^k).
\end{equation}
We obtain $z^{k+1}$ by solving the linear system \eqref{eq2_7} with constant coefficients. Then substituting $v^{k+1}=\xi^{k+1}z^{k+1}$ into \eqref{eq2_5c}, we get the following algebraic equation:
\begin{equation}\label{eq2_8}
\eta^{k+1}\left\|z^{k+1}\right\|^2(\xi^{k+1})^2+\frac{\mathcal{E}^k}{\Delta t}\xi^{k+1}-\frac{\xi^k\mathcal{E}^{k-1}}{\Delta t}=0.
\end{equation}
If $\left\|z^{k+1}\right\| \neq 0$, \eqref{eq2_8} is a quadratic equation for $\xi^{k+1}$. Using the root formula,
\begin{equation*}\label{eq2_9}
	\xi^{k+1}=\frac{-\frac{\mathcal{E}^k}{\Delta t} \pm \sqrt{(\frac{\mathcal{E}^k}{\Delta t})^2+4\eta^{k+1}\left\|z^{k+1}\right\|^2\frac{\xi^k\mathcal{E}^{k-1}}{\Delta t}}}{2\eta^{k+1}\left\|z^{k+1}\right\|^2}.
\end{equation*}
Since we expect $\xi^{k+1}$ to approximate $1$, we always choose the positive root, i.e., 
\begin{equation}\label{eq2_10}
	\xi^{k+1}=\frac{-\frac{\mathcal{E}^k}{\Delta t} + \sqrt{(\frac{\mathcal{E}^k}{\Delta t})^2+4\eta^{k+1}\left\|z^{k+1}\right\|^2\frac{\xi^k\mathcal{E}^{k-1}}{\Delta t}}}{2\eta^{k+1}\left\|z^{k+1}\right\|^2}>0.
\end{equation}
If $\left\|z^{k+1}\right\| = 0$, \eqref{eq2_8} degenerates to a linear equation and we get $\xi^{k+1} = \xi^k \mathcal E^{k-1}/\mathcal E^{k} $. To summarize the process above, we give the LLM-SOF scheme in Algorithm \ref{alg:SOF_scheme}.
\begin{algorithm}
	\caption{The Linear LM-based Second-order Flow (LLM-SOF) Scheme}
	\label{alg:SOF_scheme}
	\begin{algorithmic}[1]
		\State \textbf{Inputs:} $x^0, v^0 \in \mathbb{R}^n, \Delta t > 0, \xi^0 =1, \{\eta^k\}_k > 0$
        \State $x^{-1} = x^0, v^{-1} = v^0$
		\For{$k = 0, 1, \dots, N - 1$}
		\State $z^{k+1} = (\frac{I}{\Delta t}+S\Delta t\mathcal{L}+\eta^{k+1}I)^{-1}(\frac{v^k}{\Delta t\xi^k}-\nabla f(x^k)) $
		\State $\displaystyle \xi^{k+1}=\begin{cases}
		     \frac{-\frac{\mathcal{E}^k}{\Delta t} + \sqrt{(\frac{\mathcal{E}^k}{\Delta t})^2+4\eta^{k+1}\left\|z^{k+1}\right\|^2\frac{\xi^k\mathcal{E}^{k-1}}{\Delta t}}}{2\eta^{k+1}\left\|z^{k+1}\right\|^2}, \quad \|z^{k+1}\| \neq 0\\
             \frac{ \xi^k \mathcal E^{k-1}}{\mathcal E^{k}}, \text{ otherwise }
		\end{cases}
        $
		\State $x^{k+1}=x^k+\Delta t\xi^{k+1}z^{k+1}$
		\EndFor
		\State \Return $x^N$
	\end{algorithmic}
\end{algorithm}

The positivity of the multiplier and the energy stable property follow directly from \eqref{eq2_10}.

\begin{theorem}\label{th2_1}
	The Lagrange multiplier $\xi^{k}$ in Algorithm \ref{alg:SOF_scheme} is positive for $k \geq 0$. Furthermore, Algorithm \ref{alg:SOF_scheme} is unconditionally energy stable in the sense that the modified pseudo-energy
	\begin{equation}\label{eq2_11}
		\tilde{\mathcal{E}}^{k+1} - \tilde{\mathcal{E}}^k = -\Delta t \eta^{k+1} \left\| v^{k+1} \right\|^2 \leq 0, \quad \forall~ k \geq 0
	\end{equation}
    where \(\tilde{\mathcal{E}}^{k+1} := \xi^{k+1} \mathcal{E}(x^k, v^k)\) is nonnegative for all $k \geq 0$.
\end{theorem}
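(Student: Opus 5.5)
The plan is an induction on $k$ for positivity, followed by a direct reading of \eqref{eq2_5c} for the energy identity. The standing assumption $\mathcal E>0$ gives $\mathcal E^k>0$ for every $k$, including $\mathcal E^{-1}=\mathcal E(x^{-1},v^{-1})=\mathcal E^0$. The base case is $\xi^0=1>0$.

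For the inductive step, assume $\xi^k>0$. Then the constant term of the quadratic \eqref{eq2_8}, namely $-\xi^k\mathcal E^{k-1}/\Delta t$, is strictly negative. Two cases arise.
\begin{itemize}
\item If $\|z^{k+1}\|\neq0$, the leading coefficient $\eta^{k+1}\|z^{k+1}\|^2$ is positive, since $\eta^{k+1}>0$. The product of the two roots is therefore negative, so exactly one root is positive. Equivalently, in \eqref{eq2_10} the square root is strictly larger than $\mathcal E^k/\Delta t$, because the discriminant exceeds $(\mathcal E^k/\Delta t)^2$ by the positive quantity $4\eta^{k+1}\|z^{k+1}\|^2\xi^k\mathcal E^{k-1}/\Delta t$. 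Hence the chosen root satisfies $\xi^{k+1}>0$.
\item If $\|z^{k+1}\|=0$, then $\xi^{k+1}=\xi^k\mathcal E^{k-1}/\mathcal E^k>0$ as a quotient of positive numbers.
\end{itemize}
This closes the induction. It also shows that $z^{k+1}=v^{k+1}/\xi^{k+1}$ is well defined, so the algorithm is consistent with the scheme \eqref{eq2_5}. The linear system \eqref{eq2_7} is uniquely solvable because $\frac{I}{\Delta t}+S\Delta t\mathcal L+\eta^{k+1}I$ is symmetric positive definite, given that $\mathcal L$ is semi positive-definite and $S>0$.

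For the energy law, I will check that the $\xi^{k+1}$ produced by the algorithm satisfies \eqref{eq2_8}, and hence \eqref{eq2_5c}, with $v^{k+1}=\xi^{k+1}z^{k+1}$. In the nondegenerate case it is a root of \eqref{eq2_8}. In the degenerate case \eqref{eq2_8} reduces to the linear equation that defines $\xi^{k+1}$. Multiplying \eqref{eq2_5c} by $\Delta t$ and using $\tilde{\mathcal E}^{k+1}=\xi^{k+1}\mathcal E^k$ and $\tilde{\mathcal E}^k=\xi^k\mathcal E^{k-1}$ gives exactly \eqref{eq2_11}. The right-hand side is nonpositive since $\eta^{k+1}>0$. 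This holds for every $\Delta t>0$, which is what unconditional stability means here.

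Nonnegativity of $\tilde{\mathcal E}^{k+1}$ follows immediately as the product of $\xi^{k+1}>0$ and $\mathcal E^k>0$. The only delicate point is bookkeeping at $k=0$, which requires the conventions $x^{-1}=x^0$ and $v^{-1}=v^0$ so that $\mathcal E^{-1}$ is defined and positive. The rest is elementary.
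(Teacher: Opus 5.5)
Your proof is correct and takes essentially the same approach as the paper, which only remarks that positivity comes from choosing the positive root in \eqref{eq2_10} and that \eqref{eq2_11} is \eqref{eq2_5c} multiplied by $\Delta t$. Your induction on $\xi^k>0$, the degenerate case $\|z^{k+1}\|=0$, and the $k=0$ bookkeeping just spell out what the paper leaves implicit; one small correction is that $\mathcal L$ is not assumed symmetric, but positive semidefiniteness alone already makes the matrix in \eqref{eq2_7} invertible, so your argument is unaffected.
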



However, $\xi^{k}$ in the LLM-SOF scheme may go to 0 if $\Delta t$ is too large, which might affect accuracy and lead to a local minimum. To resolve this issue, inspired by the relaxation method in \cite{li2025class}, we compute the relaxed LM by
\begin{align}\label{eq15}
	\xi^{k+1}=\min \left \{ \frac{ \xi^{k}\mathcal{E}^{k-1} } {\mathcal{E}^{k}}, \frac{\mathcal{E}^{k+1}} {\mathcal{E}^{k} } \right\}.
\end{align}
With relaxation, we propose the relaxed LLM-SOF scheme (RSOF) in Algorithm \ref{alg:RSOF_scheme}. It is straightforward to verify Algorithm \ref{alg:RSOF_scheme} is still unconditionally energy stable.

\begin{algorithm}
	\caption{The Relaxed SOF (RSOF) Scheme}
	\label{alg:RSOF_scheme}
	\begin{algorithmic}[1]
		\State \textbf{Inputs:} $x^0, v^0 \in \mathbb{R}^n, \Delta t >0, \xi^0 =1$
        \State $x^{-1} = x^0, v^{-1} = v^0$
		\For{$k = 0, 1, \dots, N - 1$}
		\State $z^{k+1} = (\frac{I}{\Delta t}+S\Delta t\mathcal{L}+\eta^{k+1} I)^{-1}(\frac{v^k}{\Delta t\xi^k}-\nabla f(x^k)) $
		\State $\widetilde{\xi}^{k+1}=\begin{cases}
		     \frac{-\frac{\mathcal{E}^k}{\Delta t} + \sqrt{(\frac{\mathcal{E}^k}{\Delta t})^2+4\eta^{k+1}\left\|z^{k+1}\right\|^2\frac{\xi^k\mathcal{E}^{k-1}}{\Delta t}}}{2\eta^{k+1}\left\|z^{k+1}\right\|^2}, \quad \|z^{k+1}\| \neq 0\\
             \frac{ \xi^k \mathcal E^{k-1}}{\mathcal E^{k}}, \text{ otherwise }
		\end{cases}$
		\State $x^{k+1}=x^k+\Delta t\widetilde{\xi}^{k+1}z^{k+1}$
		\State $\xi^{k+1}=\min \left \{ \frac{ \xi^{k}\mathcal{E}^{k-1} } {\mathcal{E}^{k}}, \frac{\mathcal{E}^{k+1}} {\mathcal{E}^{k} } \right\}$
		\EndFor
		\State \Return $x^N$
	\end{algorithmic}
\end{algorithm}

In order to improve the efficiency and accuracy when solving complicated problems, we further propose an adaptive time step variant of the LLM-SOF scheme based on \cite{shen2023}. The main idea is that, for solving the system \eqref{eq2_5}, $\xi^k$ should be as close to 1 as possible for the sake of the time accuracy. But for a minimization problem, there is no time accuracy issue, and thus we can allow  to deviate from 1 to achieve faster convergence. However,  $\xi^k$ needs to be away from zero to avoid slow convergence. Therefore, when $\xi^k <\gamma$ and $\Delta t^k  > \Delta t_{min}$, we set $\Delta t^{k+1} = \max\{\xi^k\Delta t^k,\Delta t_{min}\}$. Otherwise, we enlarge $\Delta t^{k+1}$ by multiplying $\Delta t^{k}$ by the factor $\rho$.  The adaptive time step strategy is given in Algorithm \ref{alg:ARSOF_scheme} and we call it the adaptive relaxed LLM-SOF (ARSOF) method. $\Delta t_{min}$ is the lower bound of step-size with the default value $0.01$, $\rho > 1$ is an adaptive constant to enlarge step size, and $\gamma > 0$ is the threshold for the Lagrange multiplier $\xi$.


The relaxed and adaptive variants are both unconditionally energy stable as stated in the following theorem. Numerically, our proposed algorithms converge faster with adaptive time step size.
\begin{theorem}\label{thm:RSOFenergy_decay}
	The Lagrange multiplier $\xi^{k}$ in Algorithm \ref{alg:RSOF_scheme} and Algorithm \ref{alg:ARSOF_scheme} are positive for $k \geq 0$. Furthermore, Algorithm \ref{alg:RSOF_scheme} and Algorithm \ref{alg:ARSOF_scheme}  are unconditionally energy stable in the sense that the modified pseudo-energy is nonincreasing:
	\begin{equation*}
		\tilde{\mathcal{E}}^{k+1} \leq \tilde{\mathcal{E}}^k \leq \cdots \leq   \tilde{\mathcal{E}}^0, \quad  k \geq 0,
	\end{equation*}
    where \(\tilde{\mathcal{E}}^{k+1} := \xi^{k+1} \mathcal{E}(x^k, v^k)\) is nonnegative for all $k \geq 0$.
\end{theorem}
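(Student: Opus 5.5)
Since $\mathcal E>0$ throughout and $\xi^0=1$, I will argue by induction on $k$ that $\xi^k>0$ and $\tilde{\mathcal E}^{k+1}\le\tilde{\mathcal E}^k$, treating Algorithm~\ref{alg:RSOF_scheme} and Algorithm~\ref{alg:ARSOF_scheme} together. In the adaptive scheme $\Delta t$ is replaced by $\Delta t^{k+1}$ at each step, and $\Delta t^{k+1}\ge \Delta t_{min}>0$, so the algebra is identical. The time step only enters through the quadratic equation for the intermediate multiplier $\widetilde\xi^{k+1}$; the relaxed multiplier itself is given by the explicit formula \eqref{eq15}.

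\emph{Positivity.} Assume $\xi^k>0$. Both candidates in
\[
\xi^{k+1}=\min\left\{\frac{\xi^k\mathcal E^{k-1}}{\mathcal E^k},\ \frac{\mathcal E^{k+1}}{\mathcal E^k}\right\}
\]
are quotients of positive quantities, because $\mathcal E>0$ along any iterate. Hence $\xi^{k+1}>0$. For completeness I will also note that the intermediate $\widetilde\xi^{k+1}$ is positive, exactly as in Theorem~\ref{th2_1}. In the quadratic case, the discriminant strictly exceeds $(\mathcal E^k/\Delta t)^2$ because $\xi^k\mathcal E^{k-1}>0$, so the chosen root is positive. In the degenerate case $\widetilde\xi^{k+1}=\xi^k\mathcal E^{k-1}/\mathcal E^k>0$. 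Consequently $\tilde{\mathcal E}^{k+1}=\xi^{k+1}\mathcal E^k>0$, which gives the claimed nonnegativity.

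\emph{Energy monotonicity.} Multiplying the relaxed formula by $\mathcal E^k>0$ gives
\[
\tilde{\mathcal E}^{k+1}=\xi^{k+1}\mathcal E^k=\min\{\xi^k\mathcal E^{k-1},\,\mathcal E^{k+1}\}\le \xi^k\mathcal E^{k-1}=\tilde{\mathcal E}^k .
\]
This holds for every $k\ge1$ directly. For $k=0$ the conventions $x^{-1}=x^0$, $v^{-1}=v^0$ and $\xi^0=1$ give $\tilde{\mathcal E}^0=\mathcal E^{-1}=\mathcal E^0$, and the same inequality applies. Chaining the one-step inequalities yields $\tilde{\mathcal E}^{k+1}\le\cdots\le\tilde{\mathcal E}^0$. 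The decay is therefore built into the minimum and holds regardless of how $x^{k+1}$ was produced or of the step size, which is why the stability is unconditional.

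The only point that needs care is the index bookkeeping. One must check that $\mathcal E^{k+1}$ is available when $\xi^{k+1}$ is formed: it is, since $x^{k+1}$ is computed first from $\widetilde\xi^{k+1}$, and $v^{k+1}=\widetilde\xi^{k+1}z^{k+1}$ is defined by the scheme. One must also confirm that the next iteration uses the relaxed $\xi^{k+1}$, and not $\widetilde\xi^{k+1}$, in $\frac{v^{k+1}}{\Delta t\,\xi^{k+1}}$ and in $\xi^{k+1}\mathcal E^k$, so that $\tilde{\mathcal E}^{k+1}$ is built from the relaxed multiplier. Once these indices are fixed, the rest is immediate.
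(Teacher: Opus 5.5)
Your proposal is correct and is exactly the argument the paper leaves as ``straightforward'': multiplying the relaxed formula \eqref{eq15} by $\mathcal{E}^k>0$ gives $\tilde{\mathcal{E}}^{k+1}=\min\{\tilde{\mathcal{E}}^k,\mathcal{E}^{k+1}\}\le\tilde{\mathcal{E}}^k$, and positivity of $\xi^{k+1}$ follows from $\mathcal{E}>0$ and induction. One harmless slip: in Algorithm~\ref{alg:ARSOF_scheme} you do not have $\Delta t^{k+1}\ge\Delta t_{min}$ in general, since if $\Delta t^0<\Delta t_{min}$ the else-branch only multiplies by $\rho$; but only $\Delta t^{k+1}>0$ is needed (it holds by induction), and the relaxed multiplier itself does not involve $\Delta t$ at all.
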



\begin{algorithm}
	\caption{The Adaptive Relaxed SOF (ARSOF) Scheme}
	\label{alg:ARSOF_scheme}
	\begin{algorithmic}[1]
	\State \textbf{Inputs:} $x^0, v^0 \in \mathbb{R}^n, \Delta t^0> 0, \xi^0 = 1, \rho,\gamma,\Delta t_{min} > 0$
    \State $x^{-1} = x^0, v^{-1} = v^0$
	\For{$k = 0, 1, \dots, N - 1$}
	\If{$\xi^k<\gamma \ and\  \Delta t^k>\Delta t_{min}$}
		\State $\Delta t^{k+1}=\max\{\xi^k\Delta t^k,\Delta t_{min}\}$
	\Else
		\State $\Delta t^{k+1}=\rho\Delta t^{k}$
	\EndIf
	\State $\Delta t=\Delta t^{k+1}$
	\State $z^{k+1} = (\frac{I}{\Delta t}+S\Delta t\mathcal{L}+\eta^{k+1}I)^{-1}(\frac{v^k}{\Delta t\xi^k}-\nabla f(x^k)) $
	\State $\widetilde{\xi}^{k+1}=\begin{cases}
		     \frac{-\frac{\mathcal{E}^k}{\Delta t} + \sqrt{(\frac{\mathcal{E}^k}{\Delta t})^2+4\eta^{k+1}\left\|z^{k+1}\right\|^2\frac{\xi^k\mathcal{E}^{k-1}}{\Delta t}}}{2\eta^{k+1}\left\|z^{k+1}\right\|^2}, \quad \|z^{k+1}\| \neq 0\\
             \frac{ \xi^k \mathcal E^{k-1}}{\mathcal E^{k}}, \text{ otherwise }
		\end{cases}$
	\State $x^{k+1}=x^k+\Delta t\widetilde{\xi}^{k+1}z^{k+1}$
	\State $\xi^{k+1}=\min \left \{ \frac{ \xi^{k}\mathcal{E}^{k-1} } {\mathcal{E}^{k}}, \frac{\mathcal{E}^{k+1}} {\mathcal{E}^{k} } \right\}$
	\EndFor
	\State \Return $x^N$
\end{algorithmic}
\end{algorithm}


\subsection{Linear LM-based variable and operator splitting (LLM-VOS) algorithm}\label{section2_2}
Introducing an auxiliary variable $y$, which converges to the primal variable $x$ but has its own dynamics, the  inertial gradient flow can be transformed \eqref{eq2_1} into the following system:
\begin{equation*}\label{eq2_20}
	\begin{cases}
		\dot x=y-x,\\
		\dot y=(1-\eta(t))(y-x)-\nabla f(x),
	\end{cases}
\end{equation*}
where $v=y-x$. The splitting of variables to $x$ and $y$ was inspired by the variable and operator splitting (VOS) flow \cite{chen2025acceleratedgradientmethodsvariable}, whose discretization yields accelerated methods for monotone inclusion problems. We introduce the Lagrange multiplier $\xi(t)$, and propose the linear LM-based VOS flow: 
\begin{equation}\label{eq2_21}
\left\{
	\begin{aligned}
	    &\dot x=y-x,\\
		&\dot y=(1-\eta(t))(y-x)-\xi(t)\nabla f(x),\\
		&\frac{d\mathcal{E}}{dt} = -\eta(t)\|y-x\|^2,
	\end{aligned}
\right.
\end{equation}
where
$$
\mathcal E(x,y):= f(x) + \frac{1}{2}\|y-x\|^2.
$$

An implicit–explicit (IMEX) discretization of \eqref{eq2_21} combined with accelerated over-relaxation (AOR) \cite{fef90db1-7db7-3d75-a07c-59056d8c440a} yields the following LLM-VOS scheme:
\begin{subequations}\label{eq2_22}
\begin{empheq}[left=\empheqlbrace]{align}
&\frac{x^{k+1}-x^k-(1-\xi^{k+1})(y^k-x^k)}{\Delta t}=2y^{k+1}-y^k-x^{k+1},\label{eq2_22a}\\
		&\frac{y^{k+1}-y^k}{\Delta t}=(1-\eta^{k+1})(y^{k+1}-x^k)-\xi^{k+1}\nabla f(x^k),\label{eq2_22b}\\
		&\frac{\xi^{k+1}\mathcal{E}^k - \xi^k\mathcal{E}^{k-1}}{\Delta t} = -\eta^{k+1}\left\|y^{k+1}-x^{k+1}\right\|^2.\label{eq2_22d}
\end{empheq}
\end{subequations}
with $\mathcal E^k:= \mathcal E(x^k, y^k)$ and initial values $\xi^0=1,x^{-1}=x^0$ and $y^{-1}=y^0$. \eqref{eq2_22a} used the extrapolation $y \approx 2y^{k+1}-y^k$, which is also known as the accelerated over-relaxation (AOR) originally in linear iterations \cite{fef90db1-7db7-3d75-a07c-59056d8c440a}. The AOR discretization is crucial for the analysis of accelerated convergence. In the primal–dual method of Chambolle and Pock \cite{chambolle2011first}, the parameter $\theta=1$ yields the same extrapolation, and He and Yuan \cite{article2} later reinterpreted the scheme as a preconditioned proximal point method, proving its convergence by establishing a contractive quadratic Lyapunov functional.

The implementation of the LLM-VOS scheme\eqref{eq2_22} only requires one solve of a linear system with scalar coefficient and algebraic manipulations, so its computational efficiency is improved compared to LLM-SOF method. Introducing the auxiliary variable $v^{k+1}:= y^{k+1} - x^{k+1}$ , the update of $x$ in \eqref{eq2_22a} is equivalent to 
\begin{equation}\label{eq2_24}
    x^{k+1} = x^k  +(1-\xi^{k+1} )(y^k - x^k) + \Delta t(v^{k+1} +y^{k+1}- y^k).
\end{equation}
The update of $y$ in \eqref{eq2_22b} is equivalent to 
\begin{equation}\label{eq2_25}
    y^{k+1} = y^k + \Delta t((1-\eta^{k+1})v^{k+1}-\xi^{k+1}\nabla f(x^k)).
\end{equation}

Subtracting \eqref{eq2_24} from \eqref{eq2_25} we get 
\begin{equation*}\label{eq2_26}
    \frac{v^{k+1} - \xi^{k+1} v^k}{\Delta t} = (1-\Delta t)( -\eta^{k+1}v^{k+1}-\xi^{k+1}\nabla f(x^k) ) - \Delta t v^{k+1}.
\end{equation*}
Divided both side by $\xi^{k+1}$ and define $z^{k+1}: = v^{k+1}/\xi^{k+1}$, we have
\begin{equation}\label{eq2_27}
    (1/\Delta t + \Delta t  + (1-\Delta t)\eta^{k+1}) z^{k+1}  = \frac{v^{k}}{\Delta t} -   (1-\Delta t)\nabla f(x^k),
\end{equation}
which implies 
\begin{equation*}\label{eq2_28}
     z^{k+1}  =\frac{1}{1/\Delta t + \Delta t + (1-\Delta t)\eta^{k+1}} \left(\frac{v^{k}}{\Delta t} -   (1-\Delta t)\nabla f(x^k)\right),
\end{equation*}
given $1/\Delta t + \Delta t  + (1-\Delta t)\eta^{k+1} \neq 0$.

Noted that $y^{k+1} - x^{k+1} =  z^{k+1} \xi^{k+1}$ and plugging in \eqref{eq2_22d}, we get 
\begin{equation}\label{eq2_29}
     \eta^{k+1} \|z^{k+1}\|^2 (\xi^{k+1})^2  + \frac{\mathcal{E}^k}{\Delta t}\xi^{k+1} - \xi^k\frac{\mathcal{E}^{k-1}}{\Delta t} = 0.
\end{equation}

The algebraic equation  of $\xi^{k+1}$ in \eqref{eq2_29} is identical to that of LLM-SOF method in \eqref{eq2_8} . Given that $\xi^{k+1}$ is computed, $y^{k+1}$ is updated first, followed by $x^{k+1}$. We summarize the LLM-VOS scheme in Algorithm \ref{alg:VOS_scheme}.		
\begin{algorithm}
	\caption{ The Linear LM-based Variable and Operator Splitting (LLM-VOS) Scheme}
	\label{alg:VOS_scheme}
	\begin{algorithmic}[1]
    
		\State \textbf{Inputs:} $x^0, y^0 \in \mathbb{R}^n, \Delta t > 0, \xi^0 =1$
        \State $x^{-1} = x^0, y^{-1} = y^0, v^0 = x^0 - y^0$
		\For{$k = 0, 1, \dots, N - 1$}
		\State $z^{k+1}  =\frac{1}{1/\Delta t + \Delta t + (1-\Delta t)\eta^{k+1}} \left(\frac{v^{k}}{\Delta t} -   (1-\Delta t)\nabla f(x^k)\right)$
		\State $\xi^{k+1}=\begin{cases}
		     \frac{-\frac{\mathcal{E}^k}{\Delta t} + \sqrt{(\frac{\mathcal{E}^k}{\Delta t})^2+4\eta^{k+1}\left\|z^{k+1}\right\|^2\frac{\xi^k\mathcal{E}^{k-1}}{\Delta t}}}{2\eta^{k+1}\left\|z^{k+1}\right\|^2}, \quad \|z^{k+1}\| \neq 0\\
             \frac{ \xi^k \mathcal E^{k-1}}{\mathcal E^{k}}, \text{ otherwise }
		\end{cases}$
		\State $y^{k+1} = y^k + \Delta t((1-\eta^{k+1})z^{k+1}\xi^{k+1}-\xi^{k+1}\nabla f(x^k))$
		\State $x^{k+1} = x^k  +(1-\xi^{k+1} )(y^k - x^k) + \Delta t(z^{k+1}\xi^{k+1} +y^{k+1}- y^k$)
        \State $v^{k+1}=y^{k+1}-x^{k+1}$
		\EndFor
		\State \Return $x^N$
	\end{algorithmic}
\end{algorithm}


The positivity of the multiplier and the energy stable property follow directly from \eqref{eq2_29} and \eqref{eq2_22d}.

\begin{theorem}\label{th2_3}
	The introduced global Lagrange multiplier $\xi^{k}$ in  Algorithm \ref{alg:VOS_scheme}  is positive for $k \geq 0$. Furthermore, Algorithm \ref{alg:VOS_scheme} is unconditionally energy stable in the sense that the modified pseudo-energy
	\begin{equation}\label{eq2_23}
		\tilde{\mathcal{E}}^{k+1} - \tilde{\mathcal{E}}^{k} = -\Delta t \eta^{k+1} \left\| y^{k+1} - x^{k+1} \right\|^2 \leq 0, \quad \forall \Delta t > 0,k\geq0.
	\end{equation}
\end{theorem}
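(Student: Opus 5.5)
The plan is an induction on $k$ that proves positivity of $\xi^k$ and the energy identity together, mirroring Theorem \ref{th2_1}. The key point is that the scalar equation \eqref{eq2_29} determining $\xi^{k+1}$ in Algorithm \ref{alg:VOS_scheme} is identical to \eqref{eq2_8}. Throughout I use the standing assumption $\mathcal{E}>0$, so $\mathcal{E}^k=\mathcal{E}(x^k,y^k)>0$ for every $k$, and $\eta^{k+1}>0$. The base case is $\xi^0=1>0$. For the inductive step, assume $\xi^k>0$; then the constant term $-\xi^k\mathcal{E}^{k-1}/\Delta t$ in \eqref{eq2_29} is strictly negative.

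There are two cases for $z^{k+1}$.

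\emph{Case $\|z^{k+1}\|\neq 0$.} Here \eqref{eq2_29} is a quadratic $a\xi^2+b\xi-c=0$ with $a=\eta^{k+1}\|z^{k+1}\|^2>0$, $b=\mathcal{E}^k/\Delta t>0$ and $c=\xi^k\mathcal{E}^{k-1}/\Delta t>0$. The discriminant $b^2+4ac$ exceeds $b^2$, so the root selected in the algorithm satisfies
$$\xi^{k+1}=\frac{-b+\sqrt{b^2+4ac}}{2a}>0.$$

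\emph{Case $\|z^{k+1}\|=0$.} Then $\xi^{k+1}=\xi^k\mathcal{E}^{k-1}/\mathcal{E}^k>0$ directly.

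This closes the positivity induction.

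For the energy identity, set $\tilde{\mathcal{E}}^{k+1}:=\xi^{k+1}\mathcal{E}^k$, consistent with Theorem \ref{th2_1}. Then \eqref{eq2_23} is just \eqref{eq2_22d} multiplied by $\Delta t$, provided the computed $\xi^{k+1}$ really satisfies \eqref{eq2_22d}. So the only step needing care is the consistency check that the algorithm's updates reproduce the scheme \eqref{eq2_22}. Specifically, with $y^{k+1},x^{k+1}$ defined by the update lines and $v^{k+1}=y^{k+1}-x^{k+1}$, I must confirm that $v^{k+1}=\xi^{k+1}z^{k+1}$. Once that holds, $\eta^{k+1}\|z^{k+1}\|^2(\xi^{k+1})^2=\eta^{k+1}\|y^{k+1}-x^{k+1}\|^2$, and \eqref{eq2_29} is exactly \eqref{eq2_22d}.

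This consistency step is the main obstacle. To handle it, I would subtract \eqref{eq2_24} from \eqref{eq2_25} and use the derivation of \eqref{eq2_27}. That derivation shows $v^{k+1}$ solves a linear equation whose coefficient multiplies $v^{k+1}/\xi^{k+1}$. Dividing by $\xi^{k+1}$ is legitimate because $\xi^{k+1}>0$ was just shown. The scalar coefficient $1/\Delta t+\Delta t+(1-\Delta t)\eta^{k+1}$ must be nonzero, which the paper assumes. Then uniqueness of the solution of that scalar-coefficient linear equation identifies $v^{k+1}$ with $\xi^{k+1}z^{k+1}$.

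In the degenerate case $z^{k+1}=0$, the same argument gives $y^{k+1}=x^{k+1}$. The right side of \eqref{eq2_22d} then vanishes, consistent with the linear choice of $\xi^{k+1}$.

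Finally, $\tilde{\mathcal{E}}^{k+1}=\xi^{k+1}\mathcal{E}^k>0$ as a product of positive quantities. Hence the right-hand side $-\Delta t\,\eta^{k+1}\|y^{k+1}-x^{k+1}\|^2\le 0$ for every $\Delta t>0$, with no step-size restriction. This gives the unconditional energy stability.
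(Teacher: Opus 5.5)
Your proposal is correct and matches the paper's argument. The paper simply asserts that positivity follows from the positive-root (or linear fallback) choice for \eqref{eq2_29}, and that the energy identity is \eqref{eq2_22d} multiplied by $\Delta t$ with $\tilde{\mathcal{E}}^{k+1}=\xi^{k+1}\mathcal{E}^k$. The extra consistency check $y^{k+1}-x^{k+1}=\xi^{k+1}z^{k+1}$, which the paper leaves implicit, is best verified by substituting the defining relation \eqref{eq2_27} directly into the difference of the two update lines. Those lines contain $\xi^{k+1}z^{k+1}$ rather than $y^{k+1}-x^{k+1}$, so a uniqueness argument alone would be circular. The check also needs $v^0=y^0-x^0$; the algorithm's $v^0=x^0-y^0$ is a sign typo.
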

where \(\tilde{\mathcal{E}}^{k+1} = \xi^{k+1} \mathcal{E}(x^k,y^k)\).

As discussed in Section \ref{section2_1}, we provide a relaxed version and an adaptive version of the LLM-VOS scheme, namely RVOS and ARVOS methods, as in Algorithm \ref{alg:RVOS_scheme} and Algorithm \ref{alg:ARVOS_scheme}, respectively. The relaxed and adaptive variants are unconditionally energy stable as stated in the following theorem.

\begin{theorem}\label{thm:RVOSenergy_decay}
	The Lagrange multiplier $\xi^{k}$ in Algorithm \ref{alg:RVOS_scheme} and Algorithm \ref{alg:ARVOS_scheme} are positive for $k \geq 0$. Furthermore, Algorithm \ref{alg:RVOS_scheme} and Algorithm \ref{alg:ARVOS_scheme}  are unconditionally energy stable in the sense that the modified pseudo-energy is nonincreasing:
	\begin{equation*}
		\tilde{\mathcal{E}}^{k+1} \leq \tilde{\mathcal{E}}^k \leq \cdots \leq   \tilde{\mathcal{E}}^0, \quad  k \geq 0,
	\end{equation*}
    where \(\tilde{\mathcal{E}}^{k+1} := \xi^{k+1} \mathcal{E}(x^k, y^k)\) is nonnegative for all $k \geq 0$.
\end{theorem}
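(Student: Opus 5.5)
The plan is to follow the argument for Theorem~\ref{thm:RSOFenergy_decay}. The key observation is that the relaxed and adaptive VOS algorithms (Algorithms~\ref{alg:RVOS_scheme} and \ref{alg:ARVOS_scheme}) share the same scalar structure as their SOF counterparts. In each iteration, an intermediate multiplier $\widetilde{\xi}^{k+1}$ is computed from the quadratic \eqref{eq2_29}, or from the degenerate linear equation when $z^{k+1}=0$. This multiplier is used to advance $y^{k+1}$ and $x^{k+1}$. The stored multiplier is then reset by the relaxation \eqref{eq15}:
\[
\xi^{k+1}=\min\Bigl\{\tfrac{\xi^k\mathcal{E}^{k-1}}{\mathcal{E}^k},\tfrac{\mathcal{E}^{k+1}}{\mathcal{E}^k}\Bigr\},
\qquad \mathcal{E}^k=\mathcal{E}(x^k,y^k)=f(x^k)+\tfrac12\|y^k-x^k\|^2 .
\]
The adaptive version differs only in that $\Delta t$ is replaced by $\Delta t^{k+1}$ in the computation of $z^{k+1}$ and $\widetilde{\xi}^{k+1}$. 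Since neither argument below uses a particular value of $\Delta t$, both algorithms can be treated together.

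\textbf{Positivity.} I argue by induction. $\xi^0=1>0$, and $\mathcal{E}^k>0$ for every $k$ by the standing assumption $\mathcal{E}>0$. If $\xi^k>0$, then both entries of the minimum are quotients of positive numbers, so $\xi^{k+1}>0$. For completeness I also check that the intermediate $\widetilde{\xi}^{k+1}$ is positive, so the $x,y$ updates are well defined. When $z^{k+1}\neq0$, the radicand strictly exceeds $(\mathcal{E}^k/\Delta t)^2$ because $\eta^{k+1}\|z^{k+1}\|^2\xi^k\mathcal{E}^{k-1}>0$, so the positive root is positive, exactly as in \eqref{eq2_10}. Otherwise $\widetilde{\xi}^{k+1}=\xi^k\mathcal{E}^{k-1}/\mathcal{E}^k>0$.

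\textbf{Monotonicity.} By the first entry of the minimum,
\[
\tilde{\mathcal{E}}^{k+1}=\xi^{k+1}\mathcal{E}^k\le \frac{\xi^k\mathcal{E}^{k-1}}{\mathcal{E}^k}\,\mathcal{E}^k=\xi^k\mathcal{E}^{k-1}=\tilde{\mathcal{E}}^k .
\]
Telescoping gives the stated chain down to $\tilde{\mathcal{E}}^0$. Nonnegativity of $\tilde{\mathcal{E}}^{k+1}$ is immediate from $\xi^{k+1}>0$ and $\mathcal{E}^k>0$. This step does not even need the intermediate relation \eqref{eq2_22d}. One can note separately that $\widetilde{\xi}^{k+1}\mathcal{E}^k\le\xi^k\mathcal{E}^{k-1}$ holds by \eqref{eq2_29}, so the relaxation only lowers the tracked energy.

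\textbf{Main obstacle.} The main obstacle is bookkeeping at the start of the recursion, namely the convention $x^{-1}=x^0$, $y^{-1}=y^0$. This gives $\mathcal{E}^{-1}=\mathcal{E}^0$ and hence $\tilde{\mathcal{E}}^0=\xi^0\mathcal{E}^{-1}=\mathcal{E}^0$. I will also check that $\mathcal{E}^{k+1}$, which appears in the second entry of the minimum, is computed after $x^{k+1},y^{k+1}$ are updated. This confirms the update order is consistent and every quantity in the minimum is available and positive.
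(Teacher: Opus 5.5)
Your proof is correct and matches the verification the paper leaves implicit. Positivity follows by induction, because both entries of the minimum are quotients of positive quantities. Monotonicity follows from the first entry, $\xi^{k+1}\mathcal{E}^k\le\xi^k\mathcal{E}^{k-1}$, with $\tilde{\mathcal{E}}^0=\mathcal{E}^0$ coming from the initialization.

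One aside is not justified as stated. Your remark that the relaxation ``only lowers the tracked energy'' does not follow from $\widetilde{\xi}^{k+1}\mathcal{E}^k\le\xi^k\mathcal{E}^{k-1}$. The relaxed $\xi^{k+1}$ can exceed $\widetilde{\xi}^{k+1}$, and preventing the multiplier from degenerating is the purpose of the relaxation. Nothing in your proof uses this remark, so the argument stands.
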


\begin{algorithm}
	\caption{ The Relaxed VOS (RVOS) Scheme}
	\label{alg:RVOS_scheme}
	\begin{algorithmic}[1]
		\State \textbf{Inputs:} $x^0, y^0 \in \mathbb{R}^n, \Delta t > 0, \xi^0 = 1$
        \State $x^{-1} = x^0, y^{-1} = y^0, v^0 = x^0 - y^0$
		\For{$k = 0, 1, \dots, N - 1$}
		\State $z^{k+1}  =\frac{1}{1/\Delta t + \Delta t + (1-\Delta t)\eta^{k+1}} \left(\frac{v^{k}}{\Delta t} -   (1-\Delta t)\nabla f(x^k)\right)$
		\State $\widetilde{\xi}^{k+1}=\begin{cases}
		     \frac{-\frac{\mathcal{E}^k}{\Delta t} + \sqrt{(\frac{\mathcal{E}^k}{\Delta t})^2+4\eta^{k+1}\left\|z^{k+1}\right\|^2\frac{\xi^k\mathcal{E}^{k-1}}{\Delta t}}}{2\eta^{k+1}\left\|z^{k+1}\right\|^2}, \quad \|z^{k+1}\| \neq 0\\
             \frac{ \xi^k \mathcal E^{k-1}}{\mathcal E^{k}}, \text{ otherwise }
		\end{cases}$
		\State $y^{k+1} = y^k + \Delta t((1-\eta^{k+1})z^{k+1}\widetilde{\xi}^{k+1}-\widetilde{\xi}^{k+1}\nabla f(x^k))$
		\State $x^{k+1} = x^k  +(1-\widetilde{\xi}^{k+1})(y^k - x^k) + \Delta t(z^{k+1}\widetilde{\xi}^{k+1} +y^{k+1}- y^k)$
        \State $v^{k+1}=y^{k+1}-x^{k+1}$
		\State $\xi^{k+1}=\min \left \{ \frac{ \xi^{k}\mathcal{E}^{k-1} } {\mathcal{E}^{k}}, \frac{\mathcal{E}^{k+1}} {\mathcal{E}^{k} } \right\}$
		\EndFor
		\State \Return $x^N$
	\end{algorithmic}
\end{algorithm}

\begin{algorithm}
	\caption{ The Adaptive RVOS (ARVOS) Scheme}
	\label{alg:ARVOS_scheme}
	\begin{algorithmic}[1]
		\State \textbf{Inputs:} $x^0, y^0 \in \mathbb{R}^n, \Delta t>0, \xi^0=1, \rho,\gamma,\Delta t_{min} > 0$
        \State $x^{-1} = x^0, y^{-1} = y^0, v^0 = x^0 - y^0$
		\For{$k = 0, 1, \dots, N - 1$}
		\If{$\xi^k<\gamma \ and\  \Delta t^k>\Delta t_{min}$}
			\State $\Delta t^{k+1}=\max\{\xi^k\Delta t^k,\Delta t_{min}\}$
		\Else
			\State $\Delta t^{k+1}=\rho\Delta t^{k}$
		\EndIf
		\State $\Delta t=\Delta t^{k+1}$
		\State $z^{k+1}  =\frac{1}{1/\Delta t + \Delta t + (1-\Delta t)\eta^{k+1}} \left(\frac{v^{k}}{\Delta t} -   (1-\Delta t)\nabla f(x^k)\right)$
		\State $\widetilde{\xi}^{k+1}=\begin{cases}
		     \frac{-\frac{\mathcal{E}^k}{\Delta t} + \sqrt{(\frac{\mathcal{E}^k}{\Delta t})^2+4\eta^{k+1}\left\|z^{k+1}\right\|^2\frac{\xi^k\mathcal{E}^{k-1}}{\Delta t}}}{2\eta^{k+1}\left\|z^{k+1}\right\|^2}, \quad \|z^{k+1}\| \neq 0\\
             \frac{ \xi^k \mathcal E^{k-1}}{\mathcal E^{k}}, \text{ otherwise }
		\end{cases}$
		\State $y^{k+1} = y^k + \Delta t((1-\eta^{k+1})z^{k+1}\widetilde{\xi}^{k+1}-\widetilde{\xi}^{k+1}\nabla f(x^k))$
		\State $x^{k+1} = x^k  +(1-\widetilde{\xi}^{k+1})(y^k - x^k) + \Delta t(z^{k+1}\widetilde{\xi}^{k+1} +y^{k+1}- y^k)$
        \State $v^{k+1}=y^{k+1}-x^{k+1}$
		\State $\xi^{k+1}=\min \left \{ \frac{ \xi^{k}\mathcal{E}^{k-1} } {\mathcal{E}^{k}}, \frac{\mathcal{E}^{k+1}} {\mathcal{E}^{k} } \right\}$
		\EndFor
		\State \Return $x^N$
	\end{algorithmic}
\end{algorithm}


\section{Convergence analysis}
In this section, we give the convergence analysis for our proposed schemes. We show detailed proofs for the SOF schemes and sketch the idea for the VOS schemes since rigorous proofs are direct extension.

With the modified pseudo-energy
$$\tilde{\mathcal{E}}^{k+1} := \xi^{k+1} \mathcal{E}(x^k, v^k), \quad k \geq 0,$$ we have the following convergence results regarding the convergence of sequence $\{v^{k}\}_{k\in \mathbb{N}}$ and $\{x^{k}\}_{k\in \mathbb{N}}$ inspired by \cite{CHEN2023111872}.

\begin{theorem}[Sequential convergence of $\{v^k\}_{k\in \mathbb{N}}$]\label{th3_1}
For the sequence of $\{v^k\}_{k\in \mathbb{N}}$ generated by Algorithm \ref{alg:SOF_scheme} or Algorithm \ref{alg:VOS_scheme}:
\begin{equation}\label{eq:sum_etav}
	\sum_{k=1}^{\infty}\eta^k\left\| v^{k} \right\|^2<+\infty.
\end{equation}

\noindent\textbf{Case 1 (Subsequential convergence)}:  if the damping coefficients $\{\eta^{k}\}_{k\in \mathbb{N}}$ satisfy
\begin{equation*}\label{eq2_12_2}
	\eta^k \geq \frac{\omega}{k}, \quad \text{ for some } \omega > 0,
\end{equation*}
then there exists a subsequence of $\left\{v^{k}\right\}_{k\in \mathbb{N}}$ converges to zero.

\noindent\textbf{Case 2 (Full sequential convergence)}: if the damping coefficients $\{\eta^{k}\}_{k\in \mathbb{N}}$ satisfy 
\begin{equation*}\label{eq2_12b_2}
  \eta^k \geq \epsilon_0  ,\quad \text{ for some } \epsilon_0 >0, 
\end{equation*}
then the sequence $\left\{v^{k}\right\}_{k\in \mathbb{N}}$ converges to zero.
\end{theorem}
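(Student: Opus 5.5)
The plan is to telescope the discrete energy identity from Theorem \ref{th2_1} (resp. Theorem \ref{th2_3} for Algorithm \ref{alg:VOS_scheme}). Summing \eqref{eq2_11} from $k=0$ to $K-1$ gives
\begin{equation*}
\tilde{\mathcal{E}}^{K} + \Delta t\sum_{k=1}^{K}\eta^{k}\|v^{k}\|^2 = \tilde{\mathcal{E}}^{0}.
\end{equation*}
Here $\tilde{\mathcal{E}}^{0}=\xi^0\mathcal{E}^{-1}=\mathcal{E}(x^0,v^0)$ is a finite positive number. Theorem \ref{th2_1} guarantees $\tilde{\mathcal{E}}^{K}=\xi^{K}\mathcal{E}^{K-1}\ge 0$, because $\xi^K>0$ and $\mathcal{E}>0$ by the standing normalization. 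Dropping $\tilde{\mathcal{E}}^K$ and letting $K\to\infty$ then yields \eqref{eq:sum_etav}, with bound $\tilde{\mathcal{E}}^0/\Delta t$. For the VOS scheme the identity \eqref{eq2_23} is used verbatim, with $v^{k}=y^k-x^k$ in place of $v^k$. This step should be routine; the only points to check are the indexing of $\tilde{\mathcal{E}}$ and the nonnegativity of the energy.

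For \textbf{Case 1}, I would argue by contradiction. Suppose no subsequence of $\{v^k\}$ tends to zero. Then there exist $\delta>0$ and $k_0$ such that $\|v^k\|\ge\delta$ for all $k\ge k_0$. Under $\eta^k\ge\omega/k$ this gives
\begin{equation*}
\sum_{k\ge k_0}\eta^k\|v^k\|^2\ge \omega\delta^2\sum_{k\ge k_0}\frac1k=+\infty,
\end{equation*}
which contradicts \eqref{eq:sum_etav}. Hence $\liminf_{k}\|v^k\|=0$, and we can extract a subsequence converging to zero.

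For \textbf{Case 2}, if $\eta^k\ge\epsilon_0$ then \eqref{eq:sum_etav} gives $\epsilon_0\sum_k\|v^k\|^2<\infty$. The terms of a convergent series tend to zero, so $\|v^k\|\to 0$ and the full sequence converges to zero.

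The main obstacle is the first step, namely making sure the telescoped identity really holds with a nonnegative left endpoint for all $k$. This requires positivity of $\xi^k$ and $\mathcal{E}>0$ along the whole trajectory, which Theorems \ref{th2_1} and \ref{th2_3} supply. One also has to handle the degenerate branch $\|z^{k+1}\|=0$. There $\xi^{k+1}=\xi^k\mathcal{E}^{k-1}/\mathcal{E}^k$, so the identity holds trivially since $v^{k+1}=0$. After that, Cases 1 and 2 are elementary consequences of the summability.
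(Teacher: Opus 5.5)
Your proposal is correct and follows essentially the same route as the paper: telescope the discrete energy law \eqref{eq2_5c} (resp.\ \eqref{eq2_22d}), drop $\tilde{\mathcal{E}}^n\ge 0$ to get the bound $\tilde{\mathcal{E}}^0/\Delta t$, then use a harmonic-series contradiction for Case 1 and the fact that the terms of a convergent series tend to zero for Case 2. Your contradiction hypothesis in Case 1 (an eventual uniform bound $\|v^k\|\ge\delta$) is slightly more careful than the paper's phrasing via $\liminf_{k}\|v^k\|=\delta$, and so is your explicit check of the degenerate branch $\|z^{k+1}\|=0$.
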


\begin{proof}
We prove the convergence for $\{v^k\}_{k\in \mathbb{N}}$ generated by Algorithm \ref{alg:SOF_scheme}. For Algorithm \ref{alg:VOS_scheme}, the same argument follows using  $v^{k+1}=y^{k+1}-x^{k+1}$ and \eqref{eq2_22d}.

Rearranging \eqref{eq2_5c} we get
\begin{equation}\label{eq2_14_1}
	 \eta^{k+1} \left\| v^{k+1} \right\|^2=\frac{\tilde{\mathcal{E}}^{k}-\tilde{\mathcal{E}}^{k+1}}{\Delta t}\quad \forall~ \ k\geq0.
\end{equation}
For any $n\in\mathbb{N}$, summing over \eqref{eq2_14_1} for $k=0...,n-1$ and using $\tilde{\mathcal{E}}^{n} > 0$ , we get
\begin{equation*}\label{eq2_15_1}
	\sum_{k=1}^{n}\eta^k\left\| v^{k} \right\|^2=\frac{\tilde{\mathcal{E}}^{0}-\tilde{\mathcal{E}}^{n}}{\Delta t} \leq \frac{1}{\Delta t}\tilde{\mathcal{E}}^{0}, \quad \forall~ n \geq 1.
\end{equation*}
Taking $n \to \infty$, we have \eqref{eq:sum_etav}.

\noindent\textbf{Proof for Case 1:} We verify that $\liminf_{k\to\infty} \|v_k\| = 0$ by contradiction. Suppose $\liminf_{k\to\infty} \|v_k\| = \delta > 0$. Then there exists $k_0 \in \mathbb{N}$ such that for any $k > k_0$, $\|v_k\| \ge \delta.$
As  a result, 
$$
\sum_{k = k_0}^{\infty} \eta^k \|v^k\|^2 \geq \delta^2 \sum_{k = k_0}^{\infty} \eta^k  \geq  \omega \delta^2 \sum_{k = k_0}^{\infty} \frac{1}{k} ,
$$
which contradicts \eqref{eq:sum_etav}. Therefore, $\lim \inf_{k \to \infty} \|v_k\| = 0$ and we have the desired result.

\noindent\textbf{Proof for Case 2:}
By \eqref{eq:sum_etav} and the choice on $\{\eta^k\}_k$, we conclude that
\begin{equation*}
0 \leq  \lim \inf_{k \to \infty}\left\| v^{k} \right\|^2  \leq  \lim \sup_{k \to \infty}\left\| v^{k} \right\|^2  \leq  \frac{1}{\epsilon_0} \lim_{k \to \infty} \eta^k \left\| v^{k} \right\|^2  =0.
\end{equation*}
Therefore, $\{v^k\}_{k\in \mathbb{N}}$ converges to zero. 

\end{proof}




To prove the convergence of the sequence $\{x^k\}_{k\in \mathbb{N}}$, we further assume that $f$ is coercive:
$$
f(x) \to +\infty, \quad \text{ if } \|x\| \to \infty.
$$

\begin{theorem}[Uniform boundedness of $\{x^k\}_{k\in \mathbb{N}}$ in Algorithm \ref{alg:SOF_scheme}]\label{thm:boundedness_x} Suppose that $f$ is coercive and the Lagrange multiplier $\{\xi^k\}_{k\in \mathbb{N}}$ satisfy
$$\xi^k \geq \delta_0, \quad \text{ for some } \delta_0 > 0.$$
Then the sequence of $\{x^k\}_{k\in \mathbb{N}}$ generated by Algorithm \ref{alg:SOF_scheme} is uniformly bounded.

\begin{proof}
   By \eqref{eq2_5c}, the modified pseudo-energy is nonincreasing and uniformly bounded
\begin{equation*}\label{eq:tildeEdecay}
	 0 \leq \tilde{\mathcal{E}}^{k+1} \leq \tilde{\mathcal{E}}^{k} \leq \cdots \leq   \tilde{\mathcal{E}}^{0}.
\end{equation*}

Noted that by the assumption that $\xi^{k+1}$ has a uniform lower bound,
$$
0 < f(x^k) \leq   f(x^k) + \frac{1}{2}\|v^k\|^2 = \frac{\tilde{\mathcal{E}}^{k+1}}{\xi^{k+1}} \leq \frac{\tilde{\mathcal{E}}^{0}} {\delta_0} < +\infty.
$$
Therefore $\{x^k\}_{k\in \mathbb{N}}$ is uniformly bounded since $f$ is coercive.

\end{proof}

\end{theorem}

\begin{theorem}[Uniform boundedness of $\{x^k\}_{k\in \mathbb{N}},\{y^k\}_{k\in \mathbb{N}}$ in Algorithm \ref{alg:VOS_scheme}]\label{thm:boundedness_VOS} Suppose that $f$ is coercive and the Lagrange multiplier $\{\xi^k\}_{k\in \mathbb{N}}$ satisfy
$$\xi^k \geq \delta_0, \quad \text{ for some } \delta_0 > 0.$$
Then the sequences of $\{x^k\}_{k\in \mathbb{N}},\{y^k\}_{k\in \mathbb{N}}$ generated by Algorithm \ref{alg:VOS_scheme} is uniformly bounded. 
\end{theorem}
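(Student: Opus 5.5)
The plan is to follow the argument of Theorem \ref{thm:boundedness_x}. The energy here is $\mathcal{E}(x,y)=f(x)+\frac12\|y-x\|^2$. We then recover boundedness of $y^k$ from that of $x^k$ together with a bound on $\|y^k-x^k\|$.

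\textbf{Step 1 (energy monotonicity).} By Theorem \ref{th2_3}, i.e.\ by \eqref{eq2_22d} and the positivity of $\xi^{k}$, the modified pseudo-energy $\tilde{\mathcal{E}}^{k+1}=\xi^{k+1}\mathcal{E}(x^k,y^k)$ is nonincreasing and nonnegative. Hence
\begin{equation*}
0\le \tilde{\mathcal{E}}^{k+1}\le \tilde{\mathcal{E}}^{k}\le\cdots\le\tilde{\mathcal{E}}^{0}\quad \text{for all } k\geq 0.
\end{equation*}

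\textbf{Step 2 (bound on $\mathcal{E}$).} Using the hypothesis $\xi^{k+1}\ge\delta_0$ and the standing convention $f>0$ (so $\mathcal{E}>0$), we get
\begin{equation*}
0< f(x^k)\le f(x^k)+\tfrac12\|y^k-x^k\|^2=\frac{\tilde{\mathcal{E}}^{k+1}}{\xi^{k+1}}\le \frac{\tilde{\mathcal{E}}^{0}}{\delta_0}=:M<+\infty .
\end{equation*}
This holds for every $k\geq 0$.

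\textbf{Step 3 (boundedness of $x^k$).} Since $f$ is coercive, the sublevel set $\{x: f(x)\le M\}$ is bounded. Therefore $\{x^k\}_{k\in\mathbb{N}}$ is uniformly bounded, say $\|x^k\|\le R$.

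\textbf{Step 4 (boundedness of $y^k$).} From the same inequality and $f>0$ we get $\frac12\|y^k-x^k\|^2\le M$, that is, $\|y^k-x^k\|\le\sqrt{2M}$. The triangle inequality then gives
\begin{equation*}
\|y^k\|\le\|x^k\|+\|y^k-x^k\|\le R+\sqrt{2M},
\end{equation*}
so $\{y^k\}_{k\in\mathbb{N}}$ is uniformly bounded as well.

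The only potentially delicate point is that $\tilde{\mathcal{E}}^{k+1}$ pairs the new multiplier $\xi^{k+1}$ with the old iterates $(x^k,y^k)$. Because of this index shift, the lower bound on $\xi$ must be applied at index $k+1$, and the bound starts from $k=0$ with $\tilde{\mathcal{E}}^0=\xi^0\mathcal{E}(x^{-1},y^{-1})=\mathcal{E}(x^0,y^0)$, using $x^{-1}=x^0$ and $y^{-1}=y^0$. This is finite, so no further obstacle is expected. Everything else is identical to the SOF case.
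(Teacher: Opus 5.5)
Your proposal is correct and takes essentially the same route as the paper. The paper also bounds $f(x^k)+\frac12\|y^k-x^k\|^2$ by $\tilde{\mathcal{E}}^0/\delta_0$ using the monotonicity from \eqref{eq2_22d} and $\xi^{k+1}\ge\delta_0$, uses coercivity to bound $\{x^k\}$, then uses $f>0$ to bound $\|v^k\|$ and concludes via $y^k=x^k+v^k$; your explicit treatment of the index shift in $\tilde{\mathcal{E}}^{k+1}=\xi^{k+1}\mathcal{E}(x^k,y^k)$ usefully spells out what the paper leaves implicit.
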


\begin{proof}
    The boundedness of $\{x^k\}_{k\in \mathbb{N}}$ follows by \eqref{eq2_22d} and the proof of Theorem \ref{thm:boundedness_x}. Since $f>0$, we have
    $$
    \dfrac{1}{2}\|v^k\|^2\leq f(x^k)+\dfrac{1}{2}\|v^k\|^2\leq\frac{\tilde{\mathcal{E}^0}}{\delta_0},
    $$
    which implies $\{v^k\}_{k\in \mathbb{N}}$ is uniformly bounded. The boundedness of $\{y^k\}_{k\in \mathbb{N}}$ follows from $y^k=x^k+v^k$.
\end{proof}

\begin{rem} \label{xi_uniform lower bound}
Theorem \ref{thm:boundedness_x} and Theorem \ref{thm:boundedness_VOS} require a uniform positive lower bound for $\{\xi^k\}_{k\in \mathbb{N}}$. 
We can modify \eqref{eq2_5c} and \eqref{eq2_22d} to
\begin{equation*}
    \dfrac{\widehat{\mathcal{E}}^{k+1}-\widehat{\mathcal{E}}^{k}}{\Delta t}=-\eta^{k+1}\|v^{k+1}\|^2,\label{modified_energy}
\end{equation*}
where $\widehat{\mathcal{E}}^{k+1}+\delta^{k+1}=\xi^{k+1}(\mathcal{E}^k+\delta^{k+1})$. Then we get $\xi^{k+1}$ satisfies the following algebraic equation:
$$
F(\xi^{k+1})=\eta^{k+1}\|z^{k+1}\|^2(\xi^{k+1})^2+\dfrac{\mathcal{E}^k+\delta^{k+1}}{\Delta t}\xi^{k+1}-\dfrac{\delta^{k+1}+\widehat{\mathcal{E}}^k}{\Delta t}=0.
$$
We set $\delta^0 \geq 0$ and $\delta^0 \neq 1$, and compute
$$
\delta^{k+1}=\max\left\{\dfrac{\eta^{k+1}\|z^{k+1}\|^2\delta_0^2\Delta t+\mathcal{E}^k\delta_0-\widehat{\mathcal{E}}^k}{1-\delta_0},0\right\}.
$$
It is straightforward to verify that $F(\delta_0)\leq0$ and therefore $\xi^{k+1} \geq \delta_0$.

For energy stability, by \eqref{modified_energy} we get
$$
\widehat{\mathcal{E}}^{k+1}-\widehat{\mathcal{E}}^{k}=- \eta^{k+1}\|v^{k+1}\|^2 \Delta t\leq0,
$$
where $\widehat{\mathcal{E}}^{k+1}=\widetilde{\mathcal{E}}^{k+1}+(\xi^{k+1}-1)\delta^{k+1}$ is an approximation of $\widetilde{\mathcal{E}}^{k+1}$.
\end{rem}

As a result of Theorem \ref{thm:boundedness_x}, there exists a subsequence of $\{x^k\}_{k\in \mathbb{N}}$ converges by the Bolzano–Weierstrass theorem. We next show that the limit points of $\{x^k\}_{k\in \mathbb{N}}$ form a compact and connected set.
\begin{theorem}[Properties of the cluster-point set in in Algorithm \ref{alg:SOF_scheme}]\label{thm:cluster_set}
Suppose that $f$ is coercive and if the Lagrange multiplier $\{\xi^k\}_{k\in \mathbb{N}}$  and the damping coefficients $\{\eta^{k}\}_{k\in \mathbb{N}}$ satisfy
$$\xi^k \geq \delta_0, \, \eta^{k+1} \geq \epsilon_0  \quad  \text{ for some } \delta_0 > 0,  \epsilon_0 >0, \quad \forall~k\geq0.$$
For the sequence of $\{x^k\}_{k\in \mathbb{N}}$  generated by Algorithm \ref{alg:SOF_scheme}, define the set 
$$
\Omega
:=
\left\{
\bar{x}\in\mathbb{R}^N:
\text{ there exists a subsequence }
\{x^{k_j}\}_{j\geq1}
\text{ such that }
x^{k_j} \to\bar{x} \text{ as } j \to \infty 
\right\}.
$$
Then $\Omega$ is nonempty, connect and compact. Moreover,
$$
\operatorname{dist}(x^k,\Omega)\to0
\qquad\text{as }k\to\infty, 
$$
where $\operatorname{dist}(x,\Omega):=\inf_{y\in\Omega}|x-y|.$
\end{theorem}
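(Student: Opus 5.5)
The proof is the classical argument that a bounded sequence whose successive differences tend to zero has a nonempty, compact, connected cluster set that it approaches. Three ingredients are needed.

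\textbf{Step 1: boundedness.} Theorem \ref{thm:boundedness_x} applies because $f$ is coercive and $\xi^k\ge\delta_0$. It gives a compact ball $B$ with $x^k\in B$ for all $k$.

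\textbf{Step 2: nonempty and compact.} By Bolzano--Weierstrass, $\Omega\neq\emptyset$, and clearly $\Omega\subset B$. To see that $\Omega$ is closed, write
$$
\Omega=\bigcap_{m\ge0}\overline{\{x^k:k\ge m\}}.
$$
This is an intersection of closed sets, so $\Omega$ is closed and bounded, hence compact.

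\textbf{Step 3: $\operatorname{dist}(x^k,\Omega)\to0$.} Suppose not. Then there are $\varepsilon>0$ and a subsequence with $\operatorname{dist}(x^{k_j},\Omega)\ge\varepsilon$. By boundedness this subsequence has a further convergent subsequence, whose limit $\bar x$ lies in $\Omega$. But the distance function is continuous, so $\operatorname{dist}(\bar x,\Omega)\ge\varepsilon$, a contradiction.

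\textbf{Step 4: asymptotic regularity.} The key estimate is $\|x^{k+1}-x^k\|\to0$. By \eqref{eq2_5b}, $x^{k+1}-x^k=\Delta t\,v^{k+1}$. Since $\eta^{k}\ge\epsilon_0$, Case 2 of Theorem \ref{th3_1} gives $v^k\to0$, and so $\|x^{k+1}-x^k\|\to0$. This is the only place where the damping assumption is used.

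\textbf{Step 5: connectedness.} Argue by contradiction. Suppose $\Omega=A\cup C$ with $A,C$ nonempty, disjoint and closed, hence compact. Then
$$
d:=\operatorname{dist}(A,C)>0.
$$
Define $g(x):=\operatorname{dist}(x,A)-\operatorname{dist}(x,C)$.
\begin{itemize}
\item Since points of $\Omega$ lie in both $A$ and $C$ infinitely often as limits, $g(x^k)<-d/2$ for infinitely many $k$ (near $A$) and $g(x^k)>d/2$ for infinitely many $k$ (near $C$).
\item $g$ is $2$-Lipschitz, so by Step 4, $|g(x^{k+1})-g(x^k)|\le 2\|x^{k+1}-x^k\|\to0$.
\end{itemize}
Therefore the sequence $g(x^k)$ must cross the band $[-d/4,d/4]$ infinitely often. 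This produces infinitely many indices with $|g(x^{k})|\le d/4$. These iterates are bounded, so a further subsequence converges to some $\hat x\in\Omega$ with $|g(\hat x)|\le d/4$.

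Now $\hat x$ lies in $A$ or in $C$. If $\hat x\in A$, then $g(\hat x)=-\operatorname{dist}(\hat x,C)\le -d$. If $\hat x\in C$, then $g(\hat x)=\operatorname{dist}(\hat x,A)\ge d$. Either case contradicts $|g(\hat x)|\le d/4$, so $\Omega$ is connected.

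The main obstacle is this connectedness argument, specifically making the crossing step rigorous. It requires choosing, for each crossing, an index at which $g(x^k)$ lies inside the band, which is possible because the jumps $|g(x^{k+1})-g(x^k)|$ are eventually smaller than $d/4$. Everything else is routine once Step 4 is in place.
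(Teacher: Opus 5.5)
Your proof is correct and follows the same outline as the paper: boundedness from Theorem~\ref{thm:boundedness_x}, compactness of $\Omega$, a subsequence contradiction for $\operatorname{dist}(x^k,\Omega)\to0$, and connectedness by contradiction using $\|x^{k+1}-x^k\|=\Delta t\|v^{k+1}\|\to0$ from Case~2 of Theorem~\ref{th3_1}. The differences are only in execution. You obtain closedness from $\Omega=\bigcap_m\overline{\{x^k:k\ge m\}}$ rather than the paper's diagonal choice of indices. For connectedness, you use the separating function $g$ and a discrete intermediate-value argument to produce a cluster point lying in neither piece, whereas the paper places the tail of the sequence in two disjoint $\delta$-neighborhoods $U_1,U_2$ and derives a contradictory jump $\|x^p-x^{p-1}\|>d-2\delta$.
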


\begin{proof}
As a consequence of Theorem \ref{thm:boundedness_x}, $\{x^k\}_{k\in \mathbb{N}}$
is bounded and there exists a convergent subsequence. Therefore, $\Omega$ is nonempty.

We next show that $\Omega$ is compact. For every $\bar{x}\in\Omega$, there exists a subsequence
$\{x^{k_j}\}_{j\geq1}$ such that $x^{k_j}\to\bar{x}$ as $j \to \infty$.  Since $\{x^k\}_{k\in \mathbb{N}}$ is bounded, there exists $M>0$ such that
$$
\|x^{k_j}\|\leq M,
\qquad \forall~ j\geq1.
$$
Therefore, $\|\bar x\| \leq M$ and $\Omega$ is bounded. To prove that $\Omega$ is closed, consider $\bar x \in \mathbb{R}^N$ such that $\bar{x}^m \to \bar{x}$ for $\{\bar{x}^m\}_{m\geq1}\subset\Omega$. Since $\bar{x}^m\in\Omega$, for each $m\geq1$ we can choose an index $k_m>k_{m-1}$ such that
$$
\|x^{k_m}-\bar{x}^m\|<\frac{1}{m}.
$$
Then
$$
\|x^{k_m}-\bar{x}\|
\leq
\|x^{k_m}-\bar{x}^m\|
+
\|\bar{x}^m-\bar{x}\|
\to0, \quad \text { as } m \to \infty.
$$
Therefore,
$ x^{k_m}\to\bar{x}$, which implies $\bar{x}\in\Omega$. Hence, $\Omega$ is closed. Since $\Omega \subseteq \mathbb{R}^N$ is closed and bounded, it is compact.

We now prove that $\operatorname{dist}(x^k,\Omega)\to0$ by contradiction. Suppose that there exists
$\varepsilon>0$ and a subsequence $\{x^{k_j}\}_{j\geq1}$ such that
$$
\operatorname{dist}(x^{k_j},\Omega)\geq\varepsilon
\qquad \forall~ j \geq 1.
$$
Since $\{x^{k_j}\}_{j\geq1}$ is bounded, it has a convergent
subsequence, denoted again by $\{x^{k_j}\}_{j\geq1}$, such that $x^{k_j}\to x^\ast$ for some $x^\ast\in\mathbb{R}^N$. By definition, $x^\ast\in\Omega$.
Therefore,
$$
\operatorname{dist}(x^{k_j},\Omega)
\leq
\|x^{k_j}-x^*\|
\to0,
$$
which contradicts $ \operatorname{dist}(x^{k_j},\Omega)\geq\varepsilon$. Hence, $\operatorname{dist}(x^k,\Omega)\to0.$

It remains to prove that $\Omega$ is connected. Suppose, by
contradiction, that $\Omega$ is disconnected. Then there exist two
nonempty disjoint compact sets $\Omega_1$ and $\Omega_2$ such that $\Omega = \Omega_1\cup\Omega_2.$
Since $\Omega_1$ and $\Omega_2$ are disjoint and compact, their
distance is strictly positive:
$$
d
:=
\operatorname{dist}(\Omega_1,\Omega_2)
:=
\inf\left\{
|y-z|:
y\in\Omega_1,\ z\in\Omega_2
\right\}> 0.$$
Choose
$$
0<\delta<\frac{d}{4},
$$
and define the open neighborhoods
$$
U_i
:=
\left\{
x\in\mathbb{R}^N:
\operatorname{dist}(x,\Omega_i)<\delta
\right\},
\qquad i=1,2.
$$
Then $U_1$ and $U_2$ are disjoint. 
Since $\operatorname{dist}(x^k,\Omega)\to0$, there exists $K_1\geq0$ such that
$$
\operatorname{dist}(x^k,\Omega)<\delta,
\qquad k\geq K_1,
$$
which implies
$$
x^k\in U_1\cup U_2,
\qquad k\geq K_1.
$$

On the other hand, by Theorem \ref{th3_1},
$$
\|x^{k+1}-x^k\|\to0, \quad \text{ as } k \to +\infty,
$$
there exists $K_2\geq0$ such that
$$
\|x^{k+1}-x^k\|<d-2\delta,
\qquad k\geq K_2.
$$
Let $K:=\max\{K_1,K_2\}.$ Since both $\Omega_1$ and $\Omega_2$ are nonempty subsets of the
cluster-point set $\Omega$, the sequence $\{x^k\}_{k\geq0}$ visits both $U_1$ and $U_2$ infinitely often. There exists $p\geq K$ such
that
$$
x^p\in U_1,
\quad
x^{p-1}\in U_2, \quad \text{ and } \|x^p-x^{p-1}\|<d-2\delta.
$$
Using the triangle inequality , we get
$$
\begin{aligned}
    \|x^p-x^{p-1}\| &= \|x^p - \bar x + \bar x - \bar y + \bar y - x^{p-1}\| \\
    &\geq | \|\bar x - \bar y\|  -  \|x^p - \bar x  + \bar y - x^{p-1} \|    | \\
    &\geq |d- (\operatorname{dist}(x^p,\Omega_1) +  \operatorname{dist}(x^{p-1},\Omega_2))| \\
    &> d-2\delta,
\end{aligned}
$$
where $\bar x \in \Omega_1$ and $\bar y \in \Omega_2$ such that $\|x^p - \bar x\| = \operatorname{dist}(x^p,\Omega_1) $ and  $\|x^{p-1} - \bar y\| = \operatorname{dist}(x^{p-1},\Omega_2) $. This contradicts $\|x^p-x^{p-1}\|<d-2\delta.$
Hence, $\Omega$ is connected.
\end{proof}

Similar results can be obtained for Algorithm \ref{alg:VOS_scheme}. As a result of Theorem \ref{thm:boundedness_VOS}, there exist subsequences of $\{x^k\}_{k\in \mathbb{N}},\{y^k\}_{k\in \mathbb{N}}$ converge by the Bolzano–Weierstrass theorem and their limit points form a compact and connected set.

\begin{theorem}[Properties of the cluster-point set in Algorithm \ref{alg:VOS_scheme}]\label{thm:cluster_set_VOS}
Suppose that $f$ is coercive, $0<\Delta t<1$ and if the Lagrange multiplier $\{\xi^k\}_{k\in \mathbb{N}}$  and the damping coefficients $\{\eta^{k}\}_{k\in \mathbb{N}}$ satisfy
$$\xi^k \geq \delta_0, \, \eta^{k+1} \geq \epsilon_0  \quad  \text{ for some } \delta_0 > 0,  \epsilon_0 >0, \quad \forall~k\geq0.$$
For the sequences of $\{x^k\}_{k\in \mathbb{N}}$, $\{y^k\}_{k\in \mathbb{N}}$  generated by Algorithm \ref{alg:VOS_scheme}, define the set 
$$
\Omega_{x}
:=
\left\{
\bar{x}\in\mathbb{R}^N:
\text{ there exists a subsequence }
\{x^{k_j}\}_{j\geq1}
\text{ such that }
x^{k_j} \to\bar{x} \text{ as } j \to \infty 
\right\}.
$$
$$
\Omega_{y}
:=
\left\{
\bar{y}\in\mathbb{R}^N:
\text{ there exists a subsequence }
\{y^{k_j}\}_{j\geq1}
\text{ such that }
y^{k_j} \to\bar{y} \text{ as } j \to \infty 
\right\}.
$$
Then $\Omega_{x},\Omega_{y}$ are nonempty, connect and compact. Moreover,
$$
\Omega_{x}=\Omega_{y},\quad\operatorname{dist}(x^k,\Omega)\to0,\quad\operatorname{dist}(y^k,\Omega)\to0
\qquad\text{as }k\to\infty, 
$$
where $\operatorname{dist}(x,\Omega):=\inf_{y\in\Omega}|x-y|.$
\end{theorem}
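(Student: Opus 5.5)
The plan is to reduce everything to the argument already given for Theorem \ref{thm:cluster_set}. That argument used only three ingredients: the sequence is bounded, successive differences tend to zero, and general topology. So it suffices to supply these ingredients for both $\{x^k\}$ and $\{y^k\}$ produced by Algorithm \ref{alg:VOS_scheme}, and then to show $\Omega_x=\Omega_y$.

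\textbf{Step 1 (boundedness and nonemptiness).} Theorem \ref{thm:boundedness_VOS} shows that $\{x^k\}$ and $\{y^k\}$ are bounded. By Bolzano--Weierstrass, $\Omega_x$ and $\Omega_y$ are therefore nonempty. Compactness of each set, and the limits $\operatorname{dist}(x^k,\Omega_x)\to0$ and $\operatorname{dist}(y^k,\Omega_y)\to0$, follow word for word from the closedness, boundedness and contradiction arguments in the proof of Theorem \ref{thm:cluster_set}.

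\textbf{Step 2 ($\Omega_x=\Omega_y$).} Because $\eta^{k}\ge\epsilon_0$, Case 2 of Theorem \ref{th3_1} (applied to Algorithm \ref{alg:VOS_scheme}) gives $v^k=y^k-x^k\to0$. Suppose $x^{k_j}\to\bar x$. Then $y^{k_j}=x^{k_j}+v^{k_j}\to\bar x$, so $\Omega_x\subseteq\Omega_y$. The reverse inclusion follows symmetrically. Write $\Omega:=\Omega_x=\Omega_y$; this is the set appearing in the distance statements.

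\textbf{Step 3 (vanishing increments), the main obstacle.} The connectedness proof needs $\|x^{k+1}-x^k\|\to0$, and unlike SOF this is not simply $\Delta t\|v^{k+1}\|$. I would start from \eqref{eq2_25}:
$$y^{k+1}-y^k=\Delta t\,\widetilde\xi^{k+1}\bigl((1-\eta^{k+1})z^{k+1}-\nabla f(x^k)\bigr).$$
The most delicate piece is controlling $\nabla f(x^k)$ here. The formula for $z^{k+1}$ has denominator $1/\Delta t+\Delta t+(1-\Delta t)\eta^{k+1}$, which is $\ge 1/\Delta t>0$ precisely because $0<\Delta t<1$; this is where that hypothesis enters. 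Solving that formula for the gradient gives
$$(1-\Delta t)\nabla f(x^k)=\frac{v^k}{\Delta t}-\bigl(1/\Delta t+\Delta t+(1-\Delta t)\eta^{k+1}\bigr)\frac{v^{k+1}}{\xi^{k+1}},$$
with $\xi^{k+1}\ge\delta_0$. Substituting this into the expression for $y^{k+1}-y^k$ writes the increment entirely in terms of $v^k$ and $v^{k+1}$, with coefficients bounded by constants involving $\Delta t$, $\delta_0$ and $\eta^{k+1}$.

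Two boundedness facts are needed to finish. First, $\widetilde\xi^{k+1}=\xi^{k+1}$ for Algorithm \ref{alg:VOS_scheme}, and it is bounded above because $\xi^{k+1}\le\xi^k\mathcal E^{k-1}/\mathcal E^k$ by \eqref{eq2_29}. Hence $\xi^{k+1}\mathcal E^k\le\tilde{\mathcal E}^0$, and $\mathcal E^k\ge f(x^k)>0$. I expect to need a lower bound on $\inf f$, or to shift $f$ further so that $f\ge c>0$, to make this an honest uniform bound. Second, I would assume $\eta^k$ is bounded above, as in practice. Under these two facts, $y^{k+1}-y^k\to0$. Then
$$x^{k+1}-x^k=(y^{k+1}-y^k)-(v^{k+1}-v^k)\to0.$$

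\textbf{Step 4 (connectedness).} Rerun the disjoint-neighborhood contradiction from the proof of Theorem \ref{thm:cluster_set}, once for $\{x^k\}$ and once for $\{y^k\}$. (Alternatively, use $\Omega_x=\Omega_y$ and only the $x$-version.) This shows $\Omega$ is connected.
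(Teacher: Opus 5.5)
Your Steps 1, 2 and 4 follow the paper's proof. The paper refers back to the proof of Theorem \ref{thm:cluster_set} for nonemptiness, compactness, the distance limits and connectedness, and proves $\Omega_x=\Omega_y$ from $v^k\to0$ exactly as in your Step 2. Your Step 3 deals with a point the paper skips. The connectedness argument in Theorem \ref{thm:cluster_set} uses $x^{k+1}-x^k=\Delta t\,v^{k+1}$, which does not hold for Algorithm \ref{alg:VOS_scheme}, so a separate argument that the increments vanish really is needed.

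As written, though, Step 3 proves only a weaker statement, because you add the hypothesis that $\eta^k$ is bounded above, and the theorem does not assume this. The extra hypothesis is unnecessary: if you carry out your substitution, $\eta^{k+1}$ cancels exactly. The coefficient of $v^{k+1}$ in $y^{k+1}-y^k$ is $\Delta t\bigl[(1-\eta^{k+1})+(1/\Delta t+\Delta t+(1-\Delta t)\eta^{k+1})/(1-\Delta t)\bigr]=(1+\Delta t)/(1-\Delta t)$. More directly, subtract the $x$-update from the $y$-update in Algorithm \ref{alg:VOS_scheme} and use $\xi^{k+1}z^{k+1}=v^{k+1}$. This gives
\[
(1-\Delta t)(y^{k+1}-y^k)=(1+\Delta t)v^{k+1}-\xi^{k+1}v^k,
\qquad
x^{k+1}-x^k=\frac{2\Delta t\,v^{k+1}+(1-\Delta t-\xi^{k+1})v^k}{1-\Delta t}.
\]
Neither $\nabla f(x^k)$, $\eta^{k+1}$ nor $\delta_0$ appears; $\Delta t<1$ enters only through the division by $1-\Delta t$.

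The one remaining ingredient is an upper bound on $\xi^{k+1}$. Your hedge there is settled by the paper's standing assumptions: a minimizer exists and $f$ has been shifted by a constant, so $m:=\min f>0$. Then $\mathcal E^k\ge f(x^k)\ge m$, and \eqref{eq2_29} gives $\xi^{k+1}\le\tilde{\mathcal E}^k/\mathcal E^k\le\tilde{\mathcal E}^0/m$. Consequently $\|x^{k+1}-x^k\|+\|y^{k+1}-y^k\|\le C'(\|v^k\|+\|v^{k+1}\|)\to0$ under exactly the stated hypotheses. With the $\eta$ assumption removed, your argument is complete and more careful than the paper's.
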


\begin{proof}
    It suffices to verify that $\Omega_{x} = \Omega_{y}$, the remaining arguments follows directly using the proof of Theorem \ref{thm:cluster_set}. Suppose $\bar x \in \Omega_x$ and $\bar y \in \Omega_y$, then there exist
    $$
    x^{k_{j_1}}\to \bar x, \quad  y^{k_{j_2}} \to \bar y, \quad \text{ as } j_1, \, j_2 \to \infty.
    $$
    By Theorem \ref{th3_1}, $v^k \to 0$ implies
    $$
    y^{k_{j_1}} = v^{k_{j_1}} + x^{k_{j_1}} \to \bar x, \quad x^{k_{j_2}} = y^{k_{j_2}} - v^{k_{j_2}} \to \bar y, \quad \text{ as } j_1, \, j_2 \to \infty.
    $$
    Therefore, $\bar x \in \Omega_y$ and $\bar y \in \Omega_x$. We conclude that $\Omega_x \subseteq \Omega_y$ and $\Omega_y \subseteq \Omega_x$.


\end{proof}

Finally, we show that $\{x^k\}_{k \in \mathbb{N}}$ converges to stationary points.

\begin{theorem}[Convergence to stationary points]\label{thm:stationary_conv}
Suppose that $f$ is coercive and continuously differentialble, and the linear operator $\mathcal L$ is bounded. Let $\{(x^k,v^k,\xi^k)\}_{k\in \mathbb{N}}$ be generated by Algorithm
\ref{alg:SOF_scheme} or Algorithm
\ref{alg:VOS_scheme} (with $0<\Delta t<1$). Suppose that the  Lagrange multiplier $\{\xi^k\}_{k\in \mathbb{N}}$  and the damping coefficients $\{\eta^{k}\}_{k\in \mathbb{N}}$ satisfy
$$\xi^k \geq \delta_0, \, \eta^{k+1} \leq \eta_{\max}  \quad  \text{ for some } \delta_0 > 0,  \, \eta_{\max}>0, \quad \forall~k\geq0.$$

\noindent
\textbf{Case 1 (Subsequential convergence):}
Suppose that, for some $\omega>0$,
\begin{equation}\label{eq:case1_eta_stationary}
    \eta^k\geq\frac{\omega}{k},
    \qquad k\geq1.
\end{equation}
Then there exist a subsequence $\{x^{k_j}\}_{j\geq1}$ and a point
$x^*\in\mathbb{R}^N$ such that
$$
    x^{k_j}\to x^*,
    \quad
    \nabla f(x^{k_j})\to0, \text{ as } j \to \infty \, \text{ and } \nabla f(x^*)=0.
$$
\noindent
\textbf{Case 2 (Full sequence  convergence):}
Suppose that, for some $\epsilon_0>0$,
\begin{equation*}\label{eq:case2_eta_stationary}
    \eta^k\geq\epsilon_0,
    \qquad k\geq1.
\end{equation*}
Then
$$
    \nabla f(x^k)\to0, \quad \text{ as }  k \to \infty.
$$
\end{theorem}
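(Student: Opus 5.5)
The plan is to solve the update \eqref{eq2_5a} for $\nabla f(x^k)$ and show that every other term tends to zero along the relevant indices. For Algorithm \ref{alg:SOF_scheme}, dividing \eqref{eq2_5a} by $\xi^{k+1}\ge\delta_0$ gives
\begin{equation*}
\nabla f(x^k) = -\frac{1}{\xi^{k+1}}\left(\frac{v^{k+1}}{\Delta t}+\eta^{k+1}v^{k+1}+S\Delta t\,\mathcal L v^{k+1}\right)+\frac{v^k}{\Delta t\,\xi^k},
\end{equation*}
where we used \eqref{eq2_5b} to write $x^{k+1}-x^k=\Delta t\,v^{k+1}$. Since $\xi^k\ge\delta_0$, $\eta^{k+1}\le\eta_{\max}$ and $\mathcal L$ is bounded,
\begin{equation*}
\|\nabla f(x^k)\|\le \frac{1}{\delta_0}\Big(\tfrac{1}{\Delta t}+\eta_{\max}+S\Delta t\|\mathcal L\|\Big)\|v^{k+1}\|+\frac{1}{\delta_0\Delta t}\|v^k\|.
\end{equation*}
The problem therefore reduces to controlling $\|v^k\|+\|v^{k+1}\|$.

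For Case 2, Theorem \ref{th3_1} (Case 2) gives $v^k\to0$, so the right-hand side tends to zero and $\nabla f(x^k)\to0$ along the full sequence. For Algorithm \ref{alg:VOS_scheme} I use \eqref{eq2_27} in place of \eqref{eq2_5a}. Multiplying by $\xi^{k+1}$ and isolating $(1-\Delta t)\xi^{k+1}\nabla f(x^k)$ is legitimate because $0<\Delta t<1$. This gives the analogous bound with constant $(1/\Delta t+\Delta t+\eta_{\max})/((1-\Delta t)\delta_0)$ in front of $\|v^{k+1}\|$ and $1/((1-\Delta t)\Delta t\,\delta_0)$ in front of $\|v^k\|$. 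Here $v^{k+1}=y^{k+1}-x^{k+1}$, which tends to zero by Theorem \ref{th3_1} applied to the VOS scheme.

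Case 1 is the main obstacle. Theorem \ref{th3_1} (Case 1) only yields $\liminf\|v^k\|=0$, whereas the bound needs two consecutive terms $v^k,v^{k+1}$ to be small simultaneously. I will strengthen the summability argument by applying it to the pair. From \eqref{eq:sum_etav} and $\eta^{k}\ge\omega/k$, and noting $\eta^{k+1}\ge\omega/(k+1)\ge\omega/(2k)$, we get
\begin{equation*}
\sum_k \frac{\omega}{2k}\big(\|v^k\|^2+\|v^{k+1}\|^2\big)\le \sum_k\eta^k\|v^k\|^2+\sum_k\eta^{k+1}\|v^{k+1}\|^2<\infty .
\end{equation*}
Because $\sum 1/k=\infty$, the argument of Theorem \ref{th3_1} (Case 1) gives $\liminf_k(\|v^k\|^2+\|v^{k+1}\|^2)=0$. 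Hence there is a subsequence $k_j$ with $v^{k_j}\to0$ and $v^{k_j+1}\to0$, and the bound above gives $\nabla f(x^{k_j})\to0$.

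Since $f$ is coercive and $\xi^k\ge\delta_0$, Theorem \ref{thm:boundedness_x} (respectively Theorem \ref{thm:boundedness_VOS}) shows that $\{x^{k_j}\}$ is bounded. Passing to a further subsequence, $x^{k_j}\to x^*$. Continuity of $\nabla f$ then gives $\nabla f(x^*)=\lim_j\nabla f(x^{k_j})=0$. The same subsequence construction applies verbatim to Algorithm \ref{alg:VOS_scheme}.
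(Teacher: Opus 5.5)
Your proposal is correct and follows the paper's proof essentially step for step. You use the same identity from \eqref{eq2_5a} (equivalently \eqref{eq2_7}), resp.\ \eqref{eq2_27}, to bound $\|\nabla f(x^k)\|$ by $\|v^k\|+\|v^{k+1}\|$, the same weighted summation of $\|v^k\|^2+\|v^{k+1}\|^2$ to obtain a subsequence along which both $v^{k_j}$ and $v^{k_j+1}$ vanish in Case 1, and the same boundedness-plus-continuity argument to obtain $x^*$. One small point: in the VOS bound the factor $\xi^{k+1}$ cancels in front of $\|v^k\|$, so your extra $1/\delta_0$ there is unnecessary, though it is harmless since $\delta_0\le\xi^0=1$.
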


\begin{proof}
We first prove the results for Algorithm \ref{alg:SOF_scheme}. 

\noindent\textbf{Proof for Case 1:} Define $c_k:=\|v^k\|^2+\|v^{k+1}\|^2. $ By \eqref{eq:sum_etav} and \eqref{eq:case1_eta_stationary},
\begin{align*}
    \sum_{k=1}^{\infty} \frac{c_k}{k+1}
    &=
  \sum_{k=1}^{\infty}    \frac{\|v^k\|^2 + \|v^{k+1}\|^2}{k+1}
    \leq
   \sum_{k=1}^{\infty}   \frac{\|v^k\|^2}{k}
    +
    \sum_{k=1}^{\infty}  \frac{\|v^{k+1}\|^2}{k+1}
    \\
    &\leq
    \frac{1}{\omega}
    \left(
        \eta^k\|v^k\|^2
        +
        \eta^{k+1}\|v^{k+1}\|^2
    \right) <\infty.
\end{align*}

Therefore, $\liminf_{k\to\infty}c_k=0$ and there exists a subsequence $\{k_j\}_{j\geq1}$ such that
$$
    v^{k_j}\to0,
    \qquad
    v^{k_j+1}\to0, \quad \text{ as } j \to \infty .
$$

Using \eqref{eq2_7} with
$ z^{k+1}= v^{k+1}/\xi^{k+1}$,  we obtain
\begin{equation}\label{eq:gradient_formula_stationary}
    \nabla f(x^k)
    =
    \frac{v^k}{\Delta t\,\xi^k}
    -
    \left(
        \frac{1}{\Delta t}I
        +
        S\Delta t\,\mathcal{L}
        +
        \eta^{k+1}I
    \right)
    \frac{v^{k+1}}{\xi^{k+1}}, \quad k \geq 0.
\end{equation}
Consequently,
\begin{align*}
    \|\nabla f(x^{k_j})\|
    &\leq
    \frac{1}{\Delta t\,\delta_0}
    \|v^{k_j}\|+
    \frac{1}{\delta_0}
    \left(
        \frac{1}{\Delta t}
        +
        S\Delta t\,\|\mathcal{L}\|
        +
        \eta_{\max}
    \right)
    \|v^{k_j+1}\| \to 0,  \quad \text{ as } j \to \infty.
\end{align*}

By Theorem \ref{thm:boundedness_x}, $\{x^k\}_{k\geq0}$ is bounded. As a result, the sequence $\{x^{k_j}\}_{j\geq1}$ has a convergent subsequence. With abuse of notation, we denote the convergent subsequense  as $\{x^{k_j}\}_{j\geq1}$. Let $x^*$ denote the point such that $x^{k_j}\to x^*$. $x^*$ is a stationary point of $f$ since $\nabla f$ is continuous,
$$
    \nabla f(x^*)
    =
    \lim_{j\to\infty}
    \nabla f(x^{k_j})
    =
    0.
$$

\noindent\textbf{Proof for Case 2:} Applying \eqref{eq:gradient_formula_stationary}, we obtain
\begin{align*}
    \|\nabla f(x^k)\|
    &\leq
    \frac{1}{\Delta t\,\delta_0}
    \|v^k\|+
    \frac{1}{\delta_0}
    \left(
        \frac{1}{\Delta t}
        +
        S\Delta t\,\|\mathcal{L}\|
        +
        \eta_{\max}
    \right)
    \|v^{k+1}\| \to 0,
\end{align*}
since both $\{v^k\}_{k \in \mathbb{N}}$ converges to zero by Theorem \ref{th3_1} (Case 2).

For Algorithm \ref{alg:VOS_scheme}, by \eqref{eq2_27} and $z^{k+1}=v^{k+1}/\xi^{k+1}$,
\[
\nabla f(x^k)
=
\frac{1}{1-\Delta t}
\left[
\frac{v^k}{\Delta t}
-
\left(
\frac{1}{\Delta t}
+\Delta t
+(1-\Delta t)\eta^{k+1}
\right)
\frac{v^{k+1}}{\xi^{k+1}}
\right].
\]
With $0< \Delta t < 1$, there exists $C>0$ independent of $k$ such that
$$
\|\nabla f(x^k)\|\leq C\bigl(\|v^k\|+\|v^{k+1}\|\bigr), \quad k \geq 0.
$$
The arguments follow in a similar way. For brevity, we skipped the detailed proof.
\end{proof}

By Theorem \ref{thm:stationary_conv}, every cluster point of $\{x^k\}_{k\geq0}$ is a stationary point of $f$. If, in addition, the stationary points of $f$ in the compact set containing $\{x^k\}_{k\geq0}$ are isolated, then the whole sequence converges to a stationary point $x^*$. Moreover, for the LLM-VOS scheme (Algorithm \ref{alg:VOS_scheme}), the convergence results to the stationary points can be applied to the sequence $\{y^k\}_{k \in \mathbb{N}}$.



\section{Numerical results}
We validate our proposed algorithms through benchmark problems on function optimization and numerical PDEs. The proposed methods escape local minima and achieve high accuracy during neural network training for solving PDEs. Furthermore, we show that our proposed methods exhibit strong robustness and effectiveness with respect to the choice of initial values and large learning rates, highlighting their promising potential for solving large-scale and high-dimensional problems.

\subsection{Function optimization}\label{section3_1}
In this section, we present several numerical experiments for function optimization, including the Rosenbrock function and Rastrigin function. In Section \ref{subsub1b}, we test the adaptive algorithms, ARSOF and ARVOS, and compared them with a number of other adaptive algorithms, such as ADAM and ARSAV \cite{shen2023}. In Section \ref{subsub1a}, we test the relaxed algorithms, RSOF and Rvos, and compare them with several methods based on the first-order gradient flow, such as GD, RLLM \cite{LIU2026114750}, RSAV \cite{shen2023} and so on.

\subsubsection{Rosenbrock function}\label{subsub1b}
Consider the two-dimensional (2D) Rosenbrock function
\begin{equation*}
	f(x, y) = (a - x)^2 + b(y - x^2)^2
\end{equation*}
which has no significant local extrema but is difficult to optimise; it is a classic benchmark function for testing the convergence and path-finding capabilities of optimization algorithms. This function has a global minimum at \((x, y) = (a, a^2)\), which is inside a long narrow, parabolic shaped flag valley. To find the valley is trivial, but to converge to the global minimum is usually difficult. We set $a=1$, $b=100$. All algorithms start with the initial point with coordinate $(-3,4)$ and terminate when $|f(x)-f(x^*)|$ is less than the preset precision of $\num{0.0001}$ or reaching the maximum number of iterations ($1,000$ iterations).

We test the ARSOF method (Algorithm \ref{alg:ARSOF_scheme}) and the ARVOS method (Algorithm \ref{alg:ARVOS_scheme}), and compare with ARSAV \cite{shen2023}, RLLM \cite{LIU2026114750}, ADAM \cite{kingma2014adam}, NAG \cite{JMLR:v17:15-084} and the gradient descent method. The parameters of the reference algorithms are set as suggested in the paper: NAG with \( \gamma = 0.9 \) and ADAM with \( \beta_1 = 0.9, \beta_2 = 0.999, \varepsilon = 10^{-8} \), as well as RLLM, RSAV with $S=1$ and $\mathcal{L}=\mathcal{I}$. We shall keep using these default settings in the next experiment. 

In Tables~\ref{tabX_location} and~\ref{tab3_2b}, we report the final error value $\|x - x^*\|_2$ and the numbers of iterations with respect to different step size $\Delta t$. ARSOF and ARVOS attain prescribed accuracy for the various initial learning rates, while most of the remaining algorithms do not converge within the maximum number of iteration (diverge or yield \texttt{NaN}).


Figure~\ref{fig3_2b} gives the corresponding modified pseudo-energy curves for ARSOF and ARVOS with $\Delta t=0.01$. For both methods, the pseudo-energy decreases monotonically, in accordance with Theorems~\ref{thm:RSOFenergy_decay} and~\ref{thm:RVOSenergy_decay}. We collected the searching paths generated with initial learning rate $\Delta t=0.001$ in Figure~\ref{fig3_2e}, where ARSOF and ARVOS progress along the Rosenbrock valley while other methods make limited movement from the starting point.

Figure~\ref{fig3_2c} illustrates how the Lagrange multiplier $\xi$ evolves. With an adaptive learning rate, $\xi$ is not required to equal $1$ but remains strictly positive with a positive lower bound, which is consistent with the analysis in \cite{LIU2026114750} and discussion in Remark \ref{xi_uniform lower bound}.


\begin{table}[!t]
	\renewcommand{\arraystretch}{1.1}
	\centering
	\caption{ The error $\|x - x^*\|_2$ using different methods with different initial learning rates for Rosenbrock function}\label{tabX_location}
	\begin{tabular}{c c c c c c }
		\hline
		Method $\backslash$ $\Delta t$ & $0.0001$ & $0.001$ & $0.01$  &  $0.1$ & $1$ \\
		\hline
		ARSOF & $\num{0.022286305674057125}$ & $\num{0.022085627583702295}$ & $\num{0.022045814577707802}$ & $\num{0.02227661985527798}$ & $\num{0.022138495402611598}$  \\
		\hline
		ARVOS & $\num{0.02212452376800236}$ & $\num{0.015061959586613353}$ & $\num{0.022401328913772268}$ & $\num{0.012988114002125392}$ & $\num{0.012468052118263672}$  \\
		\hline
		ARSAV & $\num{4.222798308723384}$ & $\num{4.0614528123097875}$ & $\num{4.214812778000525}$ & $\num{4.126765294682298}$ & $\num{4.209649256717768}$ \\
		\hline
		RLLM & $\num{4.2807426035769485}$ & $\num{3.299549172652342}$ & $\num{0.7295480535797394}$ & $\num{11.40175425099138}$ & $\num{11.40175425099138}$ \\
		\hline
		ADAM & $\num{4.982995347744638}$ & $\num{4.984293930885773}$ & $\num{4.971476376067657}$ & $\num{3.5904601082547907}$ & $\num{0.04327541434879419}$ \\
		\hline
		NAG & $\num{2.3896195464986905}$ & \texttt{NaN} & \texttt{NaN} & \texttt{NaN} & \texttt{NaN}\\
		\hline
		GD & $\num{4.275188033533277}$ & $\num{9.899494936611665}$ & $\num{11.40175425099138}$ & $\num{11.40175425099138}$ & $\num{11.40175425099138}$ \\
		\hline
	\end{tabular}
\end{table}

\begin{table}[!t]
		\renewcommand{\arraystretch}{1.1}
		\centering
		\caption{The number of iterations reaching the specified accuracy using ARSOF method (Algorithm \ref{alg:ARSOF_scheme}) and the ARVOS method (Algorithm \ref{alg:ARVOS_scheme}) with different initial learning rates for Rosenbrock function.}\label{tab3_2b}
		\begin{tabular}{c c c c c c c }
			\hline
			Method $\backslash$ $\Delta t$ & $0.0001$ & $0.001$ & $0.01$  &  $0.1$ & $1$ & Mean \\
			\hline
			ARSOF & $746$ & $733$ & $581$ & $866$ & $867$ & $758.6$ \\
			\hline
			ARVOS & $673$ & $658$ & $537$ & $291$ & $317$ & $495.2$ \\
			\hline
		\end{tabular}
\end{table}

\begin{figure}[htbp]
  \centering
  \makebox[\textwidth][c]{%
    \begin{minipage}[b]{0.64\textwidth}
      \centering

      \subfigure[ARSOF]{%
        \begin{minipage}[t]{0.485\linewidth}
          \centering
          \includegraphics[
            width=\linewidth,
            height=4.2cm
          ]{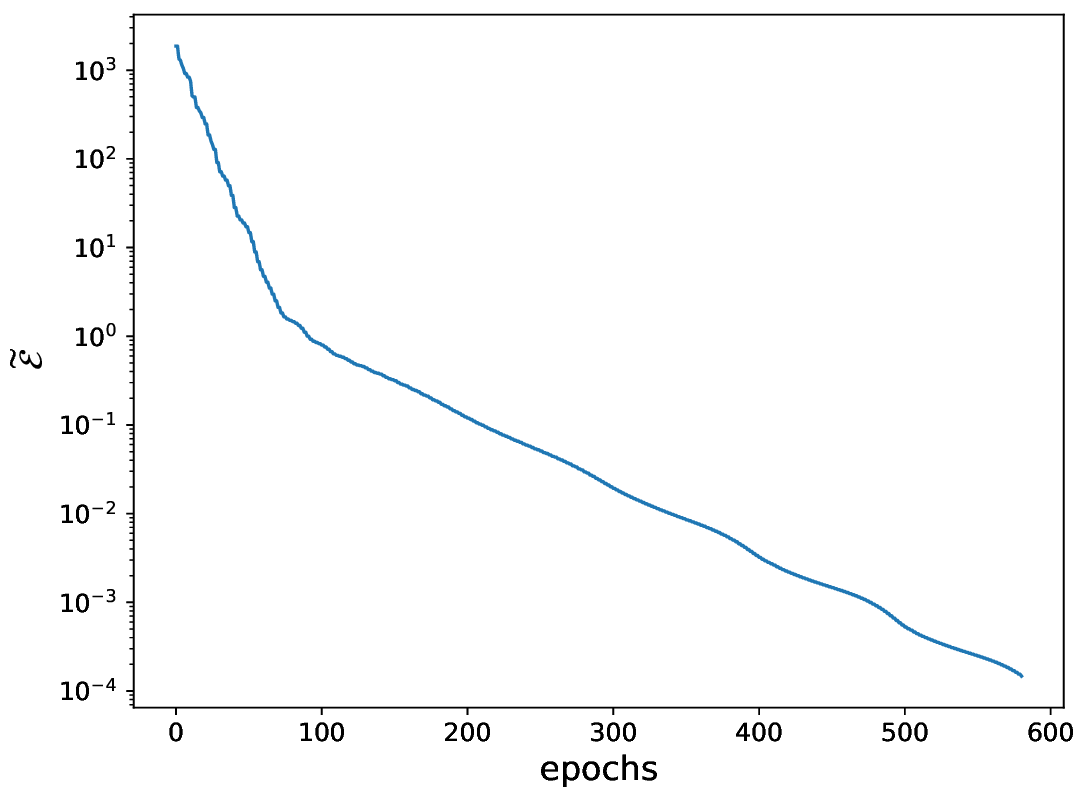}
        \end{minipage}%
      }%
      \hfill
      \subfigure[ARVOS]{%
        \begin{minipage}[t]{0.485\linewidth}
          \centering
          \includegraphics[
            width=\linewidth,
            height=4.2cm
          ]{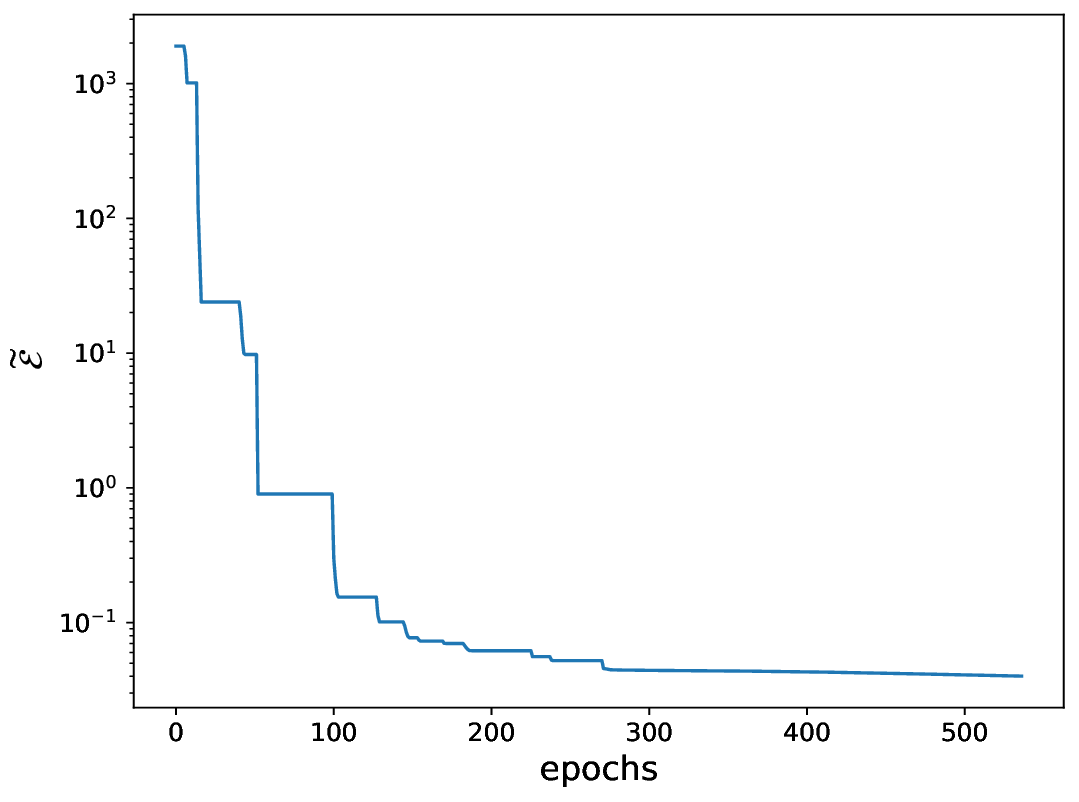}
        \end{minipage}%
      }

      \caption{Energy curves for Rosenbrock function with initial learning rate $\Delta t=0.01$}
      \label{fig3_2b}
    \end{minipage}%
    \hfill
    \begin{minipage}[b]{0.31\textwidth}
      \centering

      \includegraphics[
        width=\linewidth,
        height=4.2cm
      ]{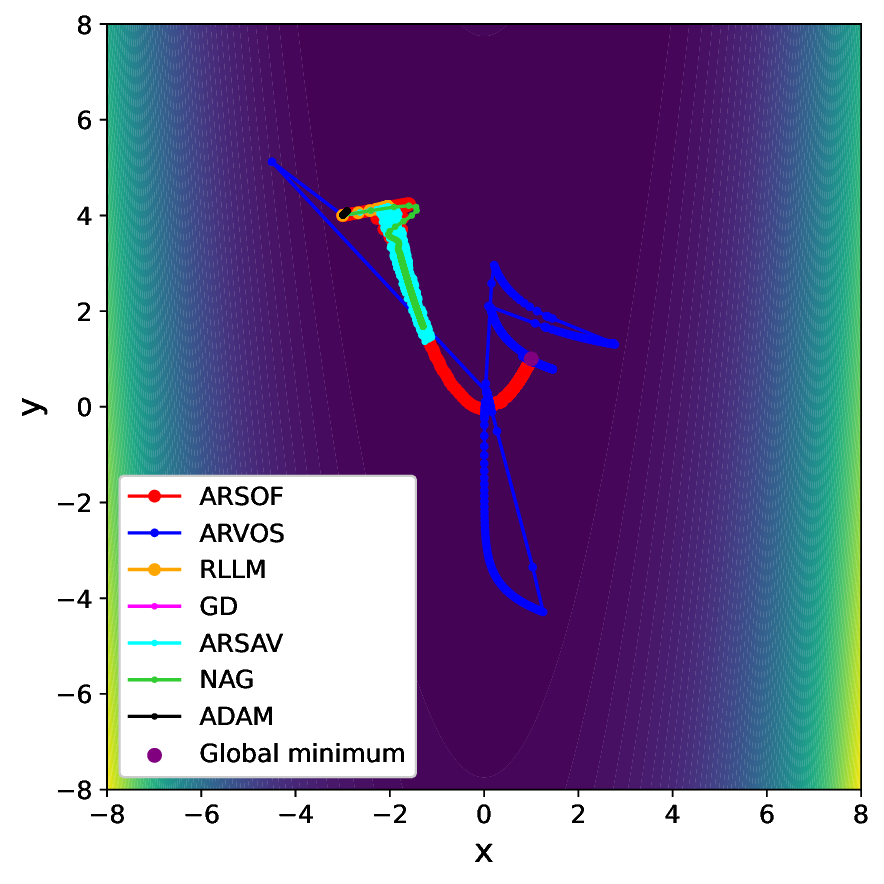}

      \caption{Search paths using different methods with (initial) learning rate $\Delta t=0.001$ for Rosenbrock function}
      \label{fig3_2e}
    \end{minipage}%
  }
\end{figure}



\begin{figure}[htbp]
	\centering
	\subfigure[ARSOF]{
		\begin{minipage}[c]{0.4\linewidth}
			\centering
			\includegraphics[scale=0.32]{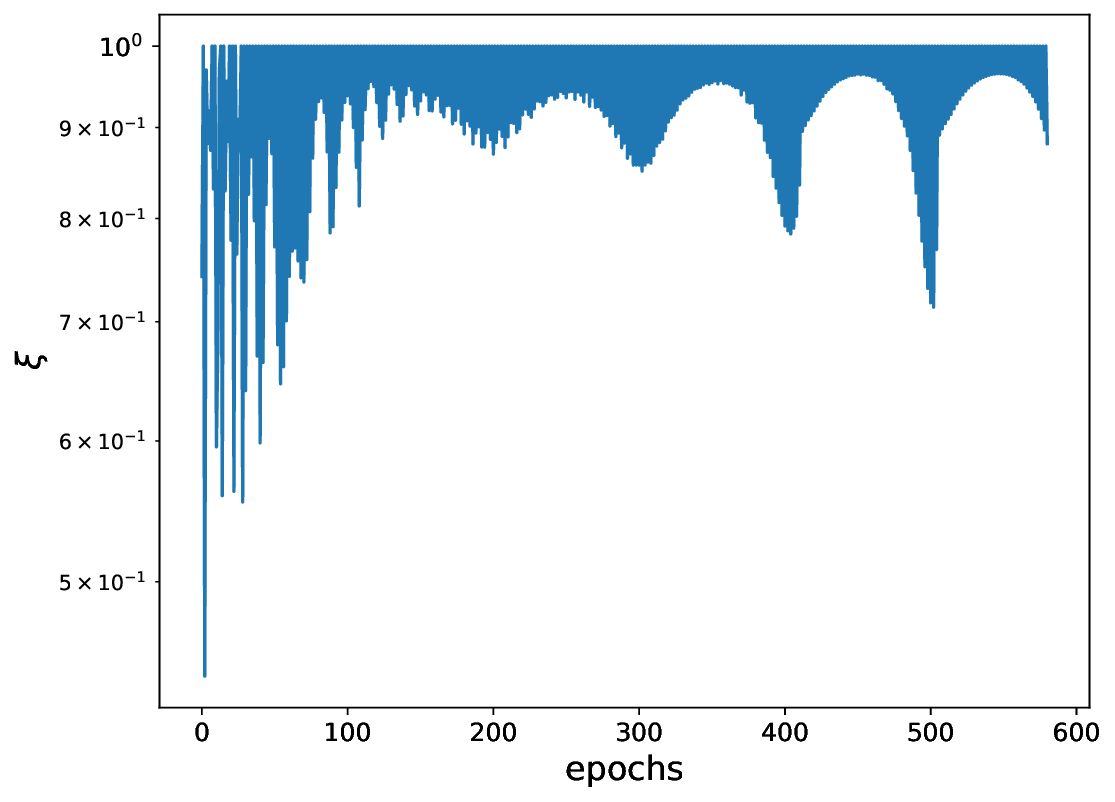}
	\end{minipage}}
	\subfigure[ARVOS]{
		\begin{minipage}[c]{0.4\linewidth}
			\centering
			\includegraphics[scale=0.32]{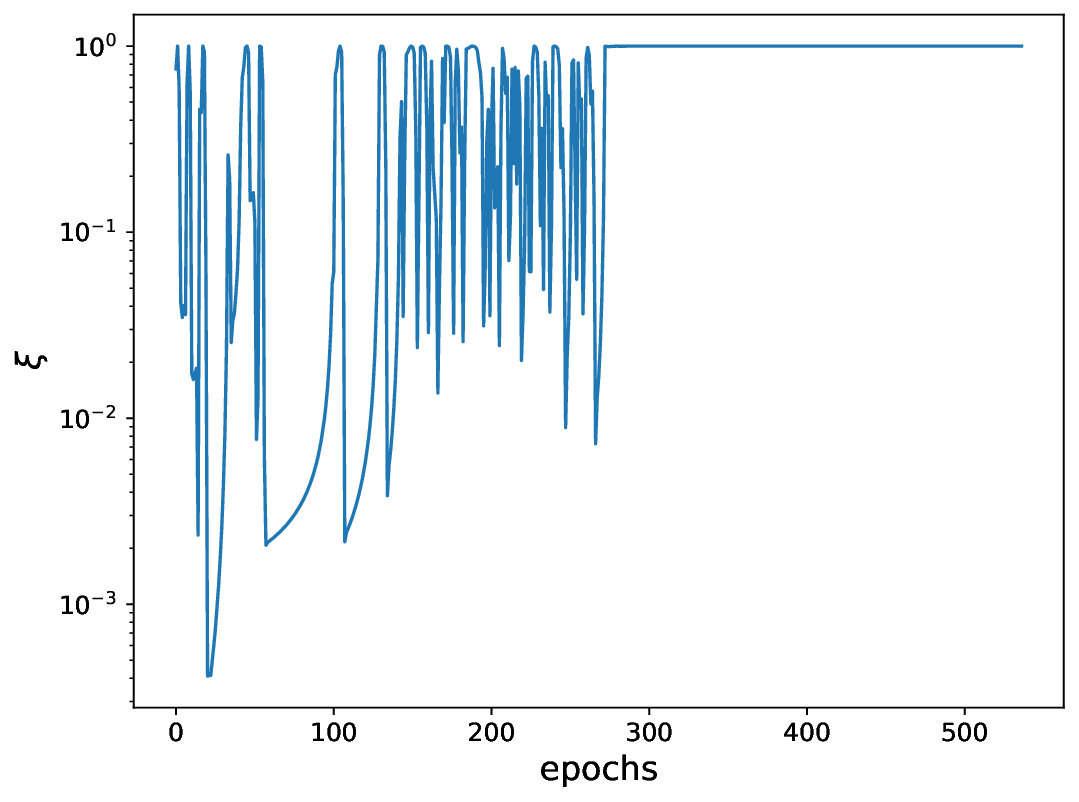}
	\end{minipage}}
	\caption{Variation of Lagrange multiplier $\xi$ for Rosenbrock function with initial learning rate $\Delta t=0.01$}\label{fig3_2c}
\end{figure}



\subsubsection{Rastrigin function}\label{subsub1a}
Consider the Rastrigin function
\begin{equation*}
	f(x) = f(\theta_1, \theta_2, \ldots, \theta_n) = \sum_{i=1}^n \theta_i^2 + 10n - 10 \sum_{i=1}^n \cos(2\pi\theta_i),
\end{equation*}
which has a large number of local optima and is highly prone to getting stuck in local optima. The function has a global minimum \( f^* = 0 \) located at \( x^* = (0, 0, \ldots, 0) \). For this example, all algorithms start with the initial point with coordinate  $(-3.5,3.5)$ and terminate when $|f(x)-f(x^*)|$ is less than the preset precision of $\num{0.0001}$ or reaching the maximum number of iterations ($1,0000$ iterations).

We test the RSOF method (Algorithm \ref{alg:RSOF_scheme}) and the RVOS method (Algorithm \ref{alg:RVOS_scheme}) and compare with RSAV \cite{shen2023} and other reference optimization algorithms, using their default settings. We report the final loss values and the numbers of iterations required to reach a prescribed accuracy with respect to different learning rates in Tables~\ref{tab3_1a} and~\ref{tab3_1b}, respectively. Both RSOF and RVOS converge to the optimal function value for all tested learning rates, whereas the remaining methods stagnate at large loss values and, in most cases, fail to satisfy the accuracy criterion within the prescribed maximal iteration number. Furthermore, the remaining methods became trapped in local minima ($|f(x)-f^*|=\num{17.909202482974}$) at certain learning rates. Moreover, both RSOF and RVOS exhibit rapid convergence when a relatively large learning rate is used. In particular, RVOS reaches the prescribed accuracy with fewer iterations than RSOF on average over the learning rates considered in Table~\ref{tab3_1b}, suggesting that the VOS flow together with interpolation may accelerate convergence. Figure~\ref{fig3_1a} shows the convergence of function value of RSOF and RVOS with $\Delta t=0.1$. Both curves decrease in an oscillatory fashion, indicating that the algorithms effectively escape local minima while progressing toward the global minimizer.

To examine algorithm performance under random initializations, we conduct 500 independent trials from random starting points. The numbers of successful runs for the adpative algorithms are summarized in Table~\ref{tab11}. On average, ARSOF and ARVOS achieve markedly higher success rates than the competing methods. The distribution of the iterations for ARSOF, ARVOS, ADAM with $\Delta t=0.01$ are shown in Figure~\ref{fig3_1d}. ARVOS and ARSOF are more stable and require fewer average iteration steps than ADAM. Moreover, ARVOS has a slightly lower average number of iteration steps than ARSOF, which may be due to its use of the VOS formulation and interpolation. To enhance stochastic exploration and escape from poor local minima, we can introduce stochastic perturbations to the dynamical systems. The corresponding stochastic variant of the LLM-SOF scheme \eqref{eq2_5} reads as:
\begin{subequations}
\begin{empheq}[left=\empheqlbrace]{align}
&\frac{v^{k+1} - \frac{\xi^{k+1}}{\xi^{k}}v^k}{\Delta t} = - \xi^{k+1}\nabla f(x^k) - S(\mathcal{L}(x^{k+1} - x^k)) -\eta^{k+1}v^{k+1} + \dfrac{\sigma\epsilon_{k+1}}{\sqrt{\Delta t}}, \\
&\frac{x^{k+1} - x^k}{\Delta t} = v^{k+1},\\
&\frac{\xi^{k+1}\mathcal{E}^k - \xi^k\mathcal{E}^{k-1}}{\Delta t} = -\eta^{k+1}\left\|v^{k+1}\right\|^2,
\end{empheq}
\end{subequations}
where $\epsilon_{k+1} \sim \mathcal{N}(0, I)$ is a vector of i.i.d. standard Gaussian random variables. $\sigma$ is the noise level. The stochastic variants of the LLM-VOS algorithm can be constructed in a similar way. Table~\ref{tab_random} shows the numbers of successful runs under 500 random initializations when using the stochastic variants of the ARSOF and ARVOS to solve the problem. 


\begin{table}[!t]
	\renewcommand{\arraystretch}{1.1}
	\centering
	\caption{Loss value $|f(x)-f^*|$ using different methods with different initial learning rates for Rastrigin function}\label{tab3_1a}
	\begin{tabular}{c c c c c }
		\hline
		Method $\backslash$ $\Delta t$ & $0.0001$ & $0.001$ & $0.01$  &  $0.1$  \\
		\hline
		RSOF & $\num{0.00005662600205980080000 }$ & $\num{0.00008670197539473170000 }$ & $\num{0.00000453327524141400000 }$ & $\num{0.00003537583069146420000 }$  \\
		\hline
		RVOS & $\num{0.00006953722728297860000 }$ & $\num{0.00002320780612663490000 }$ & $\num{0.00006173300875644820000 }$ & $\num{0.00000410820651453036000 }$  \\
		\hline
		RSAV & $\num{17.909202482974}$ & $\num{17.909202482974}$ & $\num{17.909202482974}$ & $\num{49.74827272427}$  \\
		\hline
		RLLM & $\num{17.909202482974}$ & $\num{17.909202482974}$ & $\num{21.5032769767519}$ & $\num{44.3070985794344}$  \\
		\hline
		ADAM & $\num{17.909202482974}$ & $\num{17.909202482974}$ & $\num{17.909202482974}$ & $\num{17.9092024829766}$  \\
		\hline
		NAG & $\num{17.909202482974}$ & $\num{17.909202482974}$ & $\num{57.9599009526819}$ & $\num{9.25053131921331}$ \\
		\hline
		GD & $\num{17.909202482974}$ & $\num{17.909202482974}$ & $\num{50.1443707094346}$ & $\num{57.8494274515717}$ \\
		\hline
	\end{tabular}
\end{table}

\begin{table}[!t]
	\renewcommand{\arraystretch}{1.1}
	\centering
	\caption{The number of iterations reaching the specified accuracy using RSOF method (Algorithm \ref{alg:RSOF_scheme}) and the RVOS method (Algorithm \ref{alg:RVOS_scheme}) with different initial learning rates for Rastrigin function.}\label{tab3_1b}
	\begin{tabular}{c c c c c c }
		\hline
		Method $\backslash$ $\Delta t$ & $0.0001$ & $0.001$ & $0.01$  &  $0.1$ & Mean  \\
		\hline
		RSOF & $7351$ & $2160$ & $353$ & $88$ & $2488$  \\
		\hline
		RVOS & $817$ & $99$ & $25$ & $438$ & $344.75$  \\
		\hline		
	\end{tabular}
\end{table}

\begin{figure}[htbp]
	\centering
	\subfigure[RSOF]{
		\begin{minipage}[c]{0.4\linewidth}
			\centering
			\includegraphics[scale=0.32]{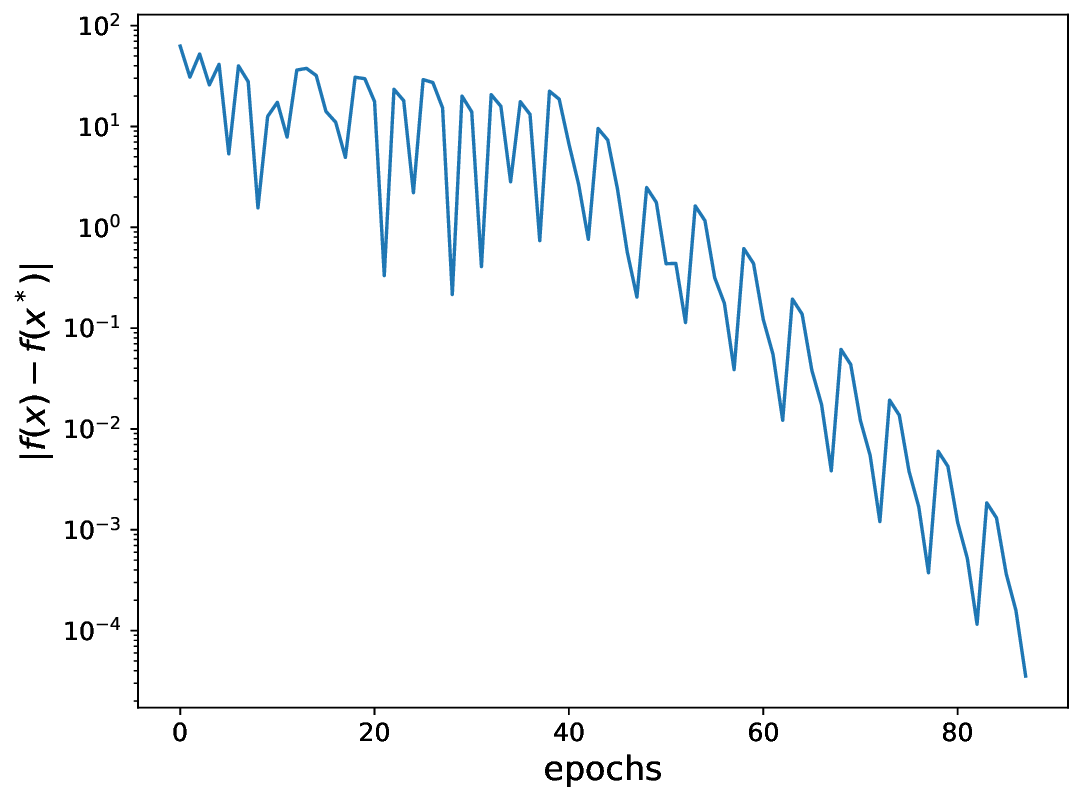}
	\end{minipage}}
	\subfigure[RVOS]{
		\begin{minipage}[c]{0.4\linewidth}
			\centering
			\includegraphics[scale=0.32]{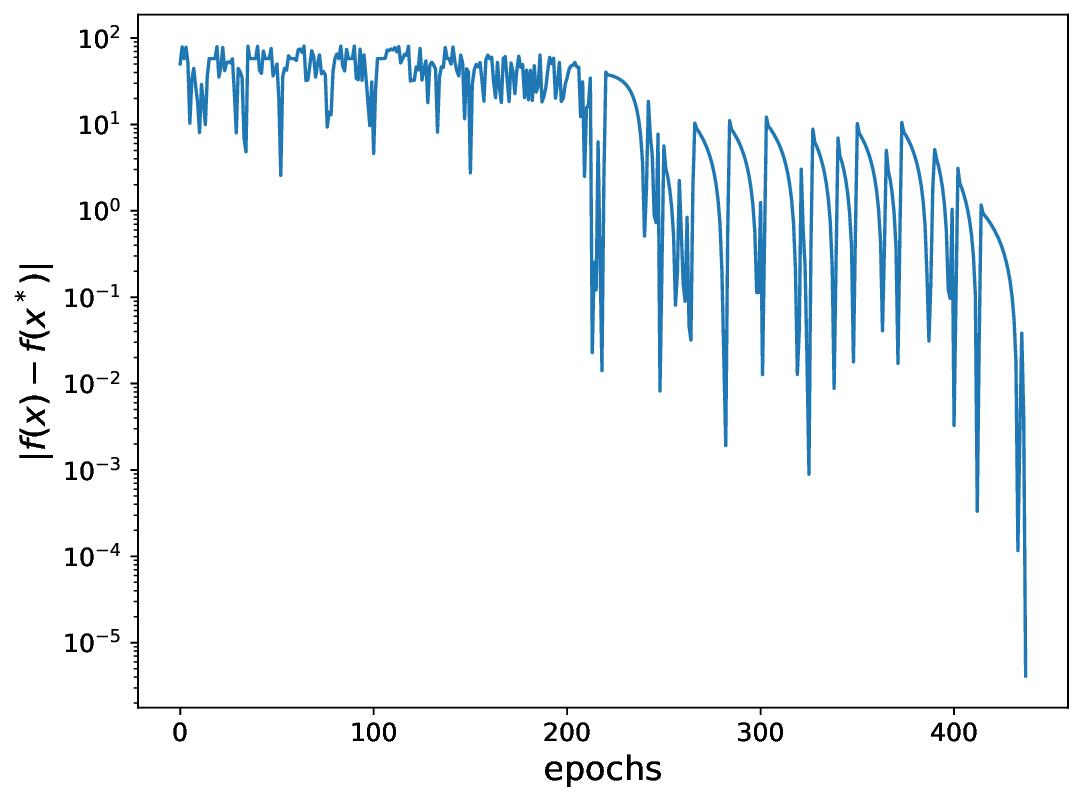}
	\end{minipage}}
	\caption{Convergence curves for Rastrigin function with $\Delta t=0.1$}\label{fig3_1a}
\end{figure}

\begin{table}[!t]
	\renewcommand{\arraystretch}{1.1}
	\centering
	\caption{Number of successful trials in 500 randomized tests for Rastrigin function}\label{tab11}
	\begin{tabular}{c c c c c c }
		\hline
		Method $\backslash$ $\Delta t$ & $0.0001$ & $0.001$ & $0.01$  &  $0.1$ & Mean \\
		\hline
		ARSOF & $43$ & $34$ & $38$ & $35$ &$37.5$   \\
		\hline
        ARVOS & $41$ & $37$ & $38$ & $38$  &$38.5$ \\
		\hline
		ARSAV & $41$ & $37$ & $1$ & $4$  &$20.75$\\
		\hline
		ARLLM & $41$ & $37$ & $0$ & $0$ &$19.5$\\
		\hline
		ADAM & $0$ & $2$ & $38$ & $14$ &$13.5$\\
		\hline
		NAG & $41$ & $37$ & $0$ & $0$ &$19.5$\\
		\hline
		GD & $31$ & $37$ & $0$ & $0$  &$17$\\
		\hline		
	\end{tabular}
\end{table}

\begin{table}[!t]
	\renewcommand{\arraystretch}{1.1}
	\centering
	\caption{Number of successful trials in 500 randomised tests for Rastrigin function with stochastic perturbations.}\label{tab_random}
	\begin{tabular}{c c c c c c }
		\hline
		Method $\backslash$ $\Delta t$ & $0.0001$ & $0.001$ & $0.01$  &  $0.1$ & Mean \\
		\hline
        $\sigma$ & $\num{10000}$ & $\num{1000000}$ & $\num{4000}$ & $\num{1000}$ &$-$   \\
		\hline
		ARSOF & $335$ & $242$ & $232$ & $312$ &$280.25$   \\
		\hline
        ARVOS & $447$ & $263$ & $311$ & $273$  &$323.5$ \\
		\hline		
	\end{tabular}
\end{table}



\begin{figure}[htbp]
	\centering
	\subfigure{
		\begin{minipage}[c]{0.4\linewidth}
			\centering
			\includegraphics[scale=0.32]{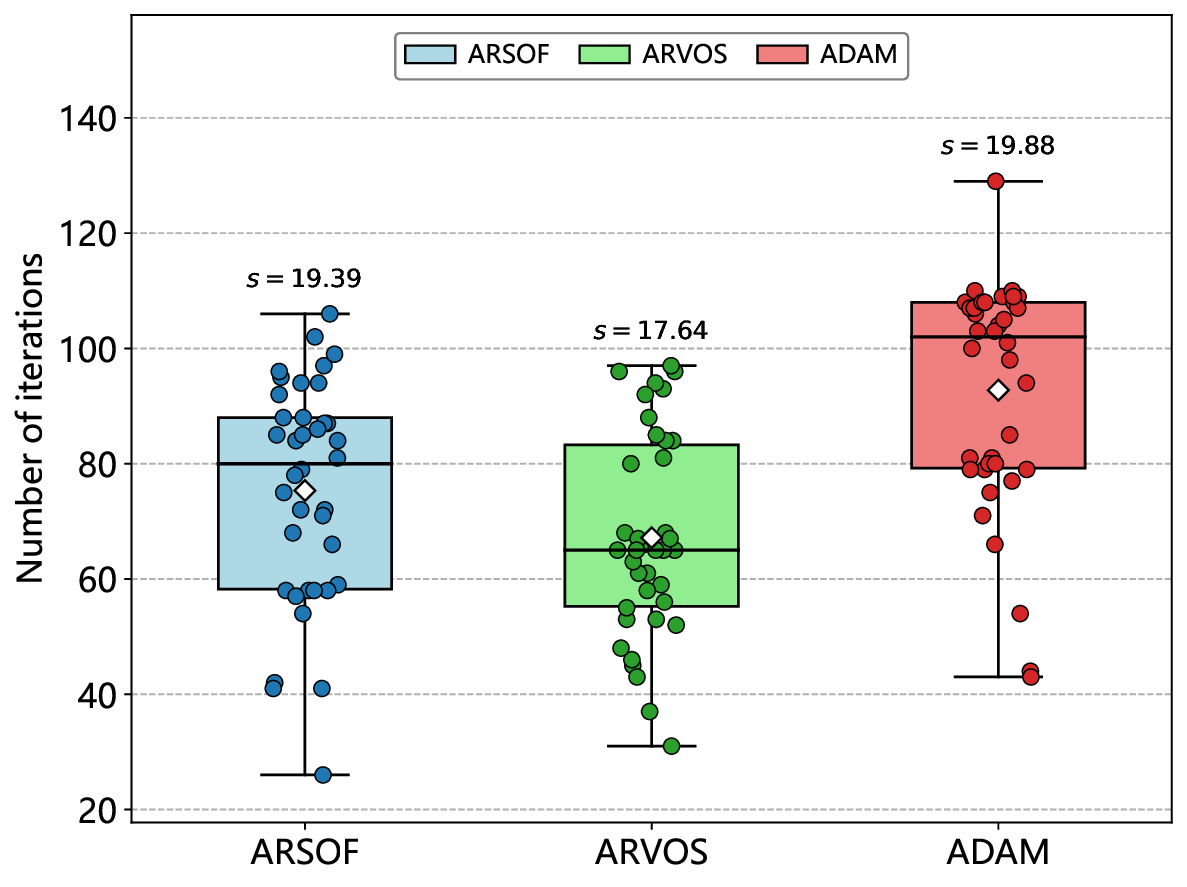}
	\end{minipage}}
	\subfigure{
		\begin{minipage}[c]{0.4\linewidth}
			\centering
			\includegraphics[scale=0.32]{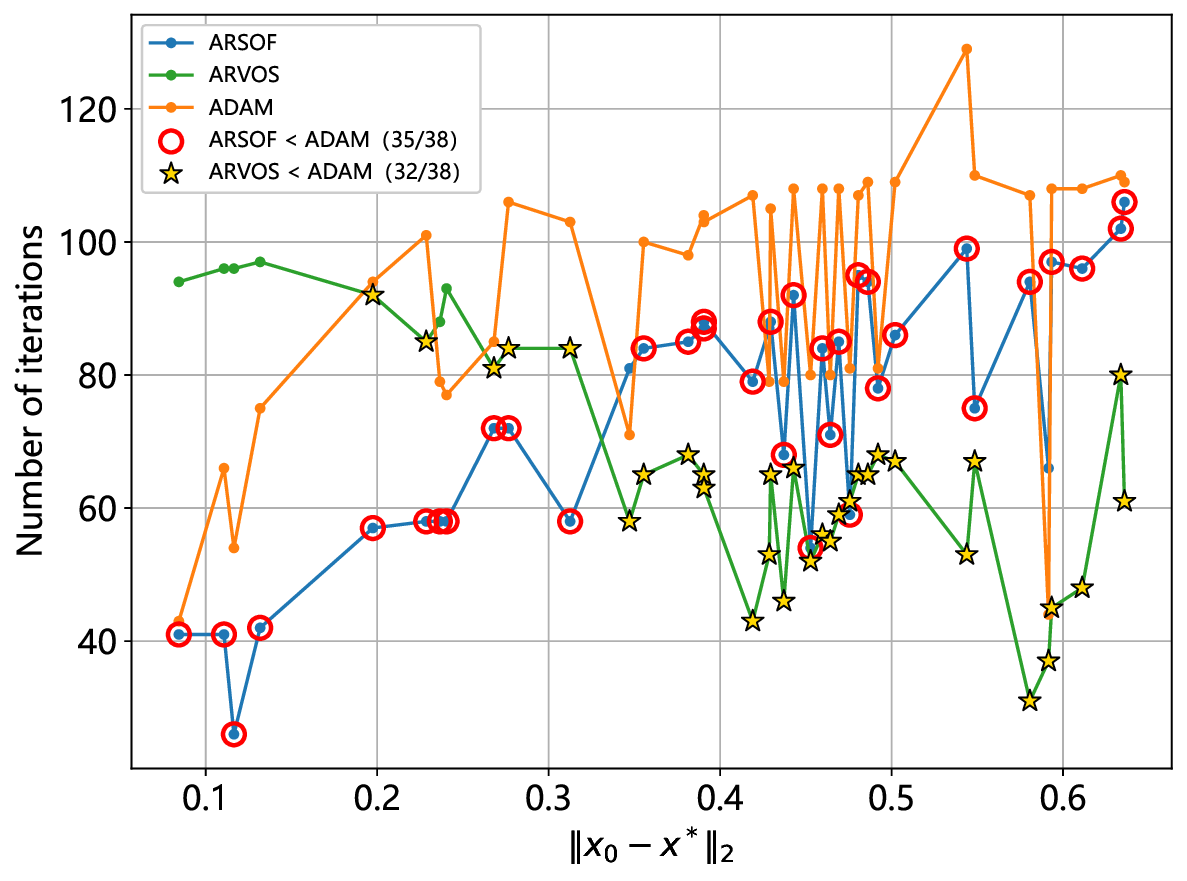}
	\end{minipage}}
	\caption{Distribution of the number of iterations for Rastrigin function with $\Delta t=0.01$}\label{fig3_1d}
\end{figure}

\subsection{Applications for solving PDEs}\label{section3_2}
In this section, we present a series of numerical experiments on the numerical solutions of PDEs, including the Poisson equation, and Burgers' equation. In Section~\ref{subsub2a}, the proposed adaptive algorithms are implemented within PINNs framework for linear PDEs problems. In Section~\ref{subsub2c}, we further demonstrate the versatility of the proposed algorithms by integrating them with DeepONets framework to solve the nonlinear Burgers' equation.

All numerical experiments are implemented in the PyTorch deep learning framework~\cite{paszke2019pytorch}. The hyperbolic tangent function (\texttt{tanh}) is used as the activation function, and Kaiming uniform initialization is adopted for networks parameter initialization unless otherwise specified. To ensure fair comparison of performance across different methods, a unified fully connected feedforward NN (FNN) architecture is employed for all tests in Sections~\ref{subsub2a}, consisting of four hidden layers with 128 neurons in each layer. A total of 10,000 training samples are generated via uniform random sampling, and the batch size is set to 200. For the experiments in Section~\ref{subsub2c}, we use the same network architecture as that in~\cite{lu2021learning}, with a training set of size 500, a validation set of size 100, and a batch size of 500. Unless otherwise stated, we set $S=1$ and $\mathcal{L}=\mathcal{I}$, where $\mathcal{I}$ denotes the identity matrix. To account for stochasticity in neural network training, every numerical experiment in Section~\ref{section3_2} is repeated for five independent runs.

We evaluate the accuracy by using the pointwise relative $L^2$-error, defined as follows:
\begin{align*}
	\|e\|_{L^2} = \frac{\sqrt{\sum_{i=1}^N |u_\theta(X_i) - u^*(X_i)|^2}}{\sqrt{\sum_{i=1}^N |u^*(X_i)|^2}},
\end{align*}
where $X_i$ is a sampling test point, $u^*$ is the exact solution, and $u_\theta$ is the approximate solution. If the training process fails due to an inappropriate choice of the (initial) learning rate, we indicate this with NAN.

\subsubsection{Poisson equation}\label{subsub2a}
In this test, we consider the $d$-dimensional Poisson equation on the domain $\Omega=[-0.5,0.5]^d$ with homogeneous Dirichlet
boundary conditions:
\begin{equation*}
	\Delta u = -d\pi^2\prod_{i=1}^d \cos(\pi x_i) , \quad \mathbf{x} \in \Omega,
\end{equation*}
for which the exact solution reads:
\begin{equation*}
	u(\mathbf{x}, t) =\prod_{i=1}^{d} \cos(\pi x_i).
\end{equation*}
In this example, we set $d=2$.

Table~\ref{tab3_3a} summarizes the mean $L^2$ errors $\|e\|_{L^2}$ of PINNs (ARSOF), PINNs (ARVOS), PINNs (ADAM), PINNs (NAG), and PINNs (SGD) for the Poisson equation across a range of initial learning rates. PINNs (ARSOF) and PINNs (ARVOS) remain insensitive to the choice of initial learning rate $\Delta t$, and consistently deliver higher accuracy than the competing methods, by at least one order of magnitude. 


Figures~\ref{fig3_3a} presents the $L^2$ error curves and and~\ref{fig3_3b} plots the error distribution of five independent runs for each method under different values of $\Delta t$. As can be observed, PINNs (ARSOF) and PINNs (ARVOS) exhibit markedly more stable convergence behavior than the other methods. In particular, PINNs (ARSOF) achieves the highest accuracy among all compared methods. 


Finally, we examine the point-wise error distributions with $\Delta t=0.1$, as shown in Figures~\ref{fig3_3f}. The results demonstrate the robustness and stability of PINNs (ARSOF) and PINNs (ARVOS) with large learning rates.

\begin{table}[!t]
	\renewcommand{\arraystretch}{1.1}
	\centering
	\caption{Mean value of $\left\|e\right\|_{L^2}$ using different methods with different initial learning
		rates for Poisson equation}\label{tab3_3a}
	\begin{tabular}{c c c c c c }
	\hline
	Method $\backslash$ $\Delta t$ & $0.0001$ & $0.001$ & $0.01$  &  $0.1$ & $1$ \\
	\hline
	PINNs (ARSOF) & $\num{0.00002287}$ & $\num{0.00004571}$ & $\num{0.00002121}$ & $\num{0.00002051}$ &$\num{0.00004329}$  \\
	\hline
    PINNs (ARVOS) & $\num{0.00006039}$ & $\num{0.00005057}$ & $\num{0.0000708}$ & $\num{0.0000603}$ & $\num{0.00070062}$  \\
	\hline
	PINNs (ADAM) & $\num{0.000504}$ & $\num{0.000128}$ & $\num{0.122706}$ & $\num{0.122996}$ & $\num{0.316025}$ \\
	\hline
	PINNs (NAG) & $\num{0.000198}$ & $\num{0.000029}$ & $\num{0.000798}$ & \texttt{NaN} & \texttt{NaN} \\
	\hline
	PINNs (SGD) & $\num{0.00049}$ & $\num{0.000202}$ & $\num{0.000545}$ & \texttt{NaN} & \texttt{NaN} \\
	\hline
\end{tabular}
\end{table}

\begin{figure}[htbp]
	\centering
	\subfigure[$\Delta t=0.0001$]{
		\begin{minipage}[c]{0.4\linewidth}
			\centering
			\includegraphics[scale=0.32]{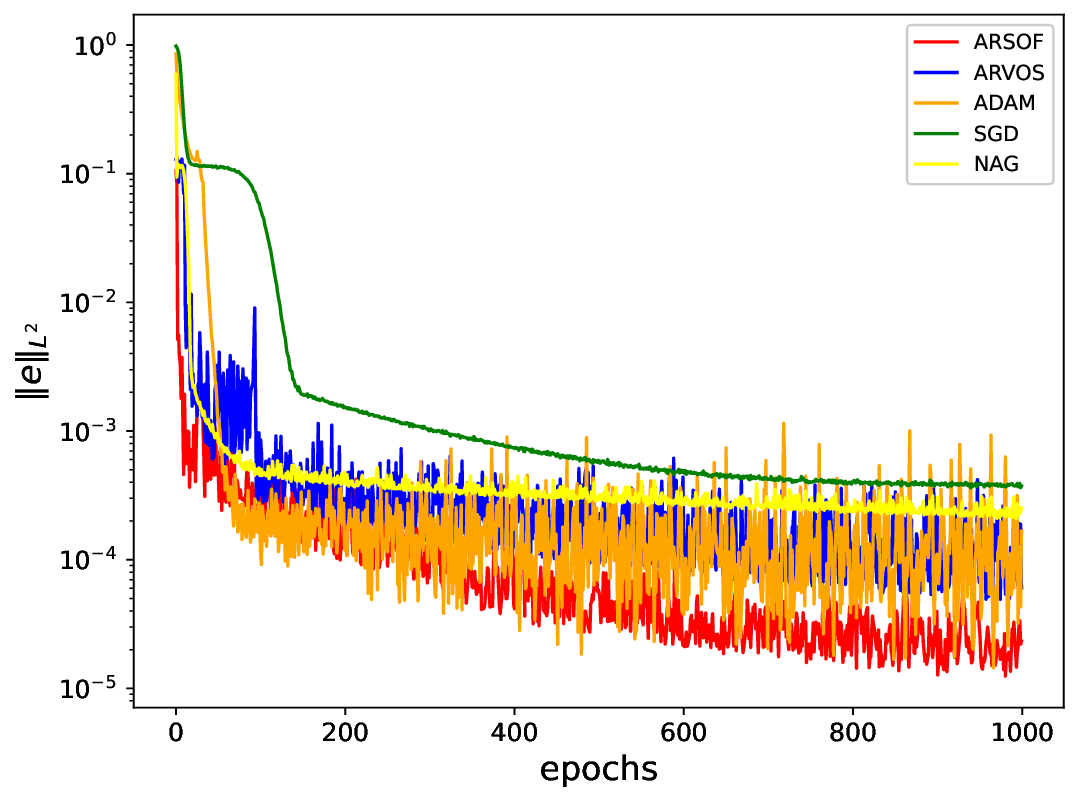}
	\end{minipage}}
	\subfigure[$\Delta t=0.001$]{
		\begin{minipage}[c]{0.4\linewidth}
			\centering
			\includegraphics[scale=0.32]{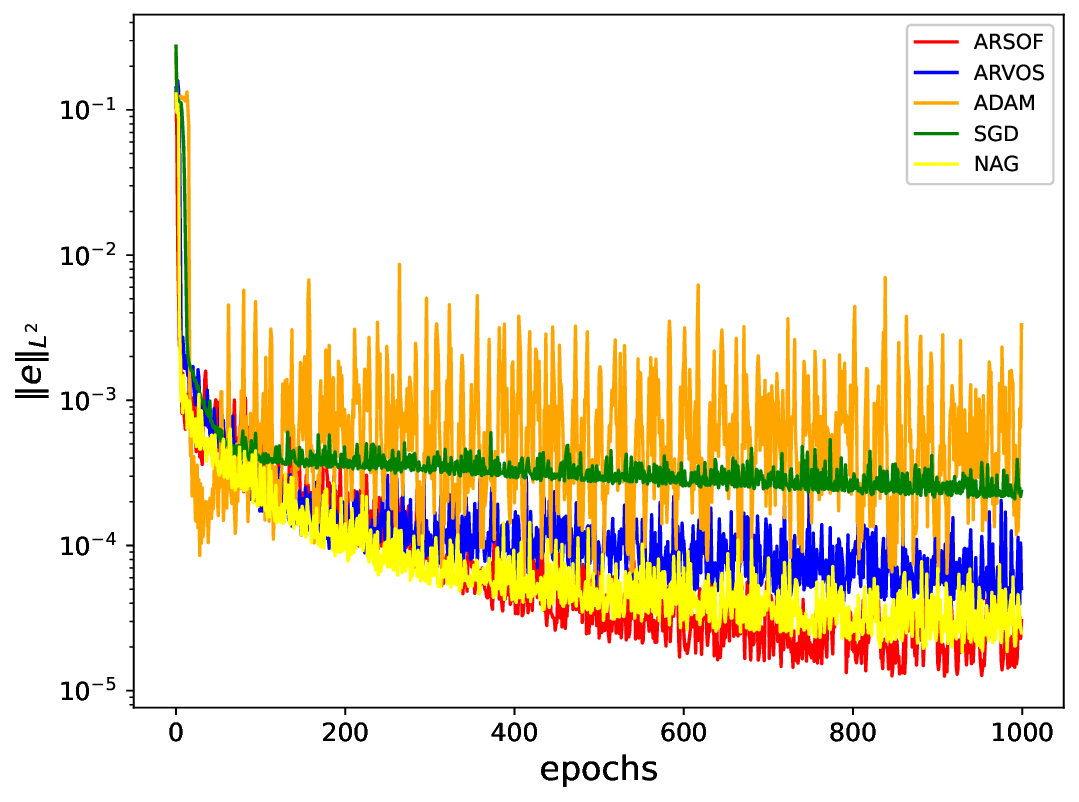}
	\end{minipage}}
	\caption{The error curves of different methods with different $\Delta t$ for Poisson equation}\label{fig3_3a}
\end{figure}

\begin{figure}[htbp]
	\centering
	\subfigure[$\Delta t=0.0001$]{
		\begin{minipage}[c]{0.4\linewidth}
			\centering
			\includegraphics[scale=0.32]{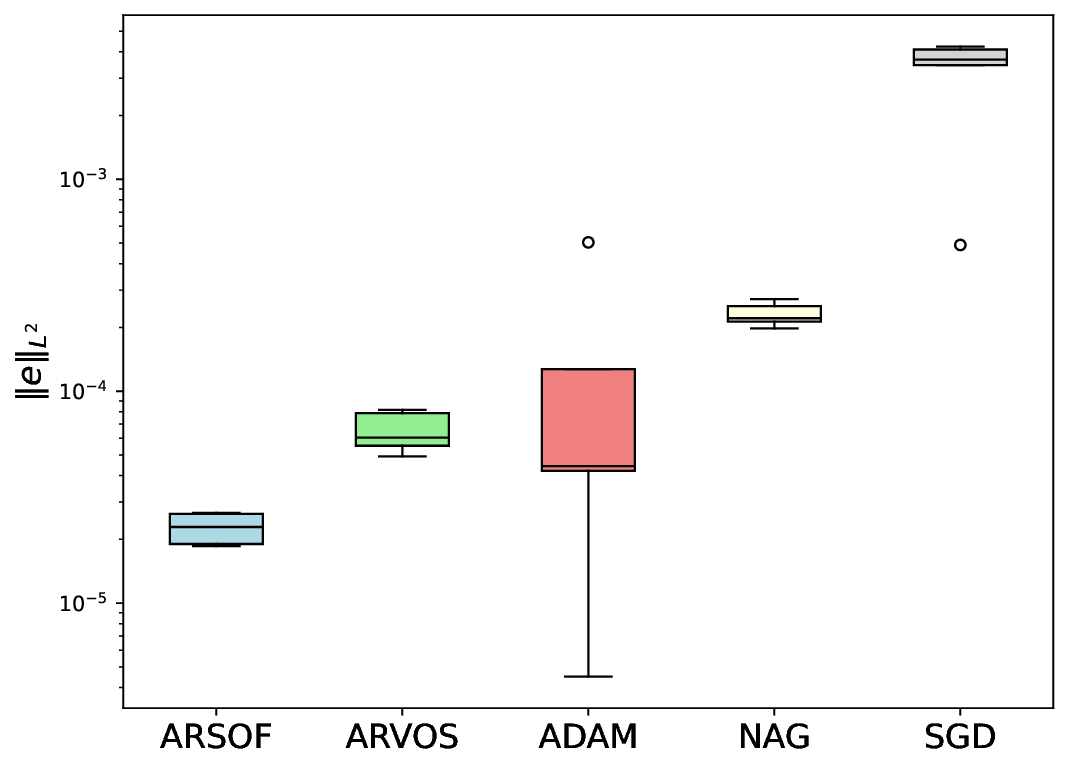}
	\end{minipage}}
	\subfigure[$\Delta t=0.001$]{
		\begin{minipage}[c]{0.4\linewidth}
			\centering
			\includegraphics[scale=0.32]{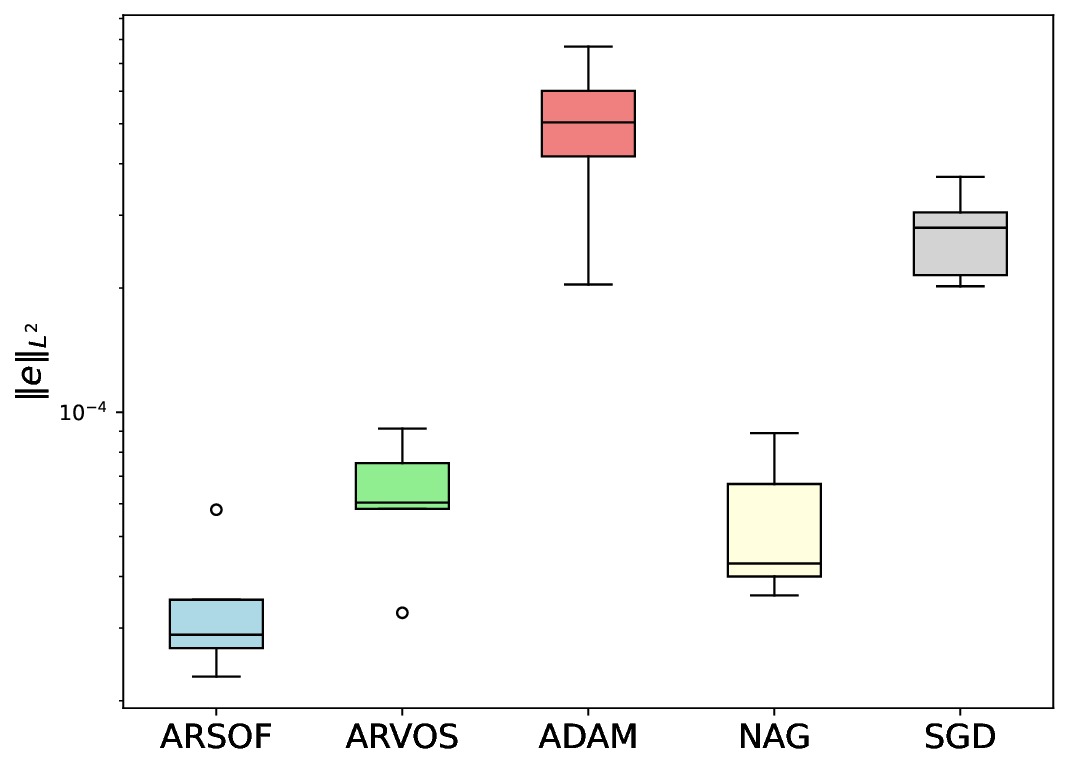}
	\end{minipage}}
	\caption{The error box plots of different methods with 5 independent runs and different $\Delta t$ for Poisson equation}\label{fig3_3b}
\end{figure}

\begin{figure}[htbp]
	\centering
	\subfigure[PINNs (ARSOF)]{
		\begin{minipage}[c]{0.4\linewidth}
			\centering
			\includegraphics[scale=0.32]{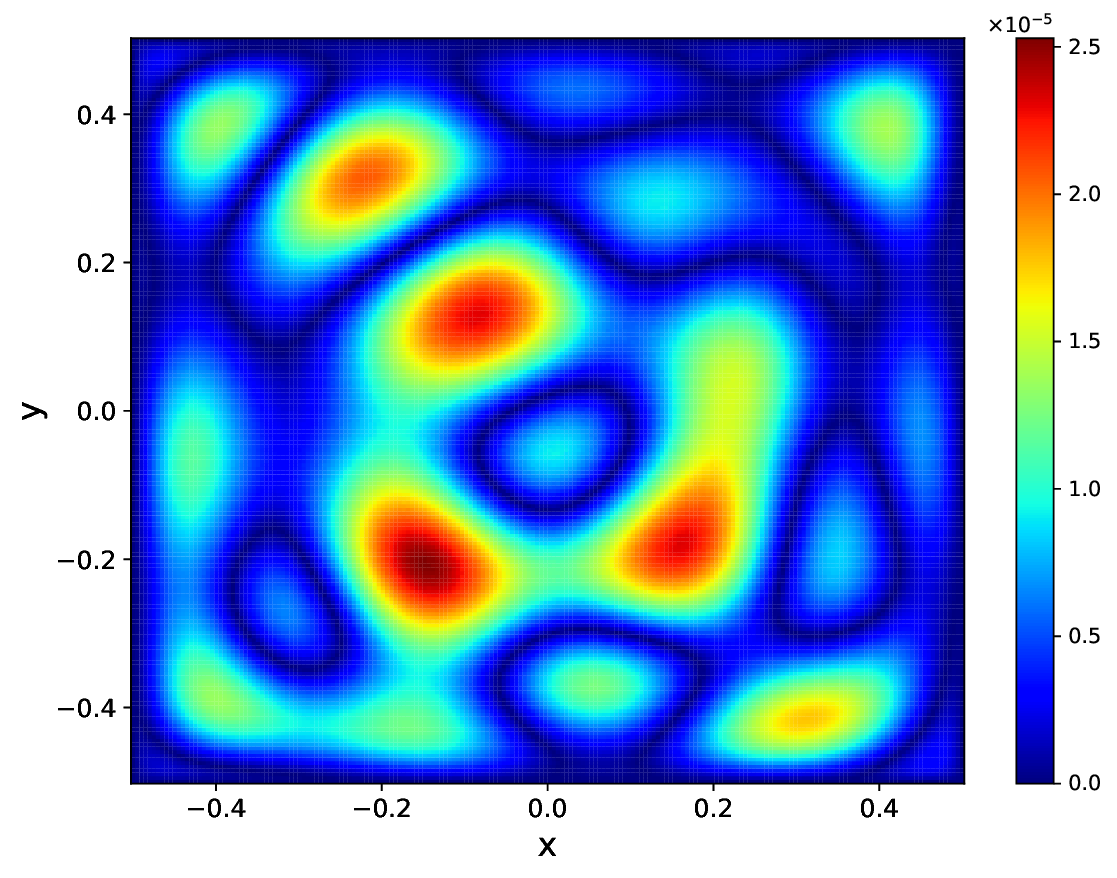}
	\end{minipage}}
    \subfigure[PINNs (ARVOS)]{
		\begin{minipage}[c]{0.4\linewidth}
			\centering
			\includegraphics[scale=0.32]{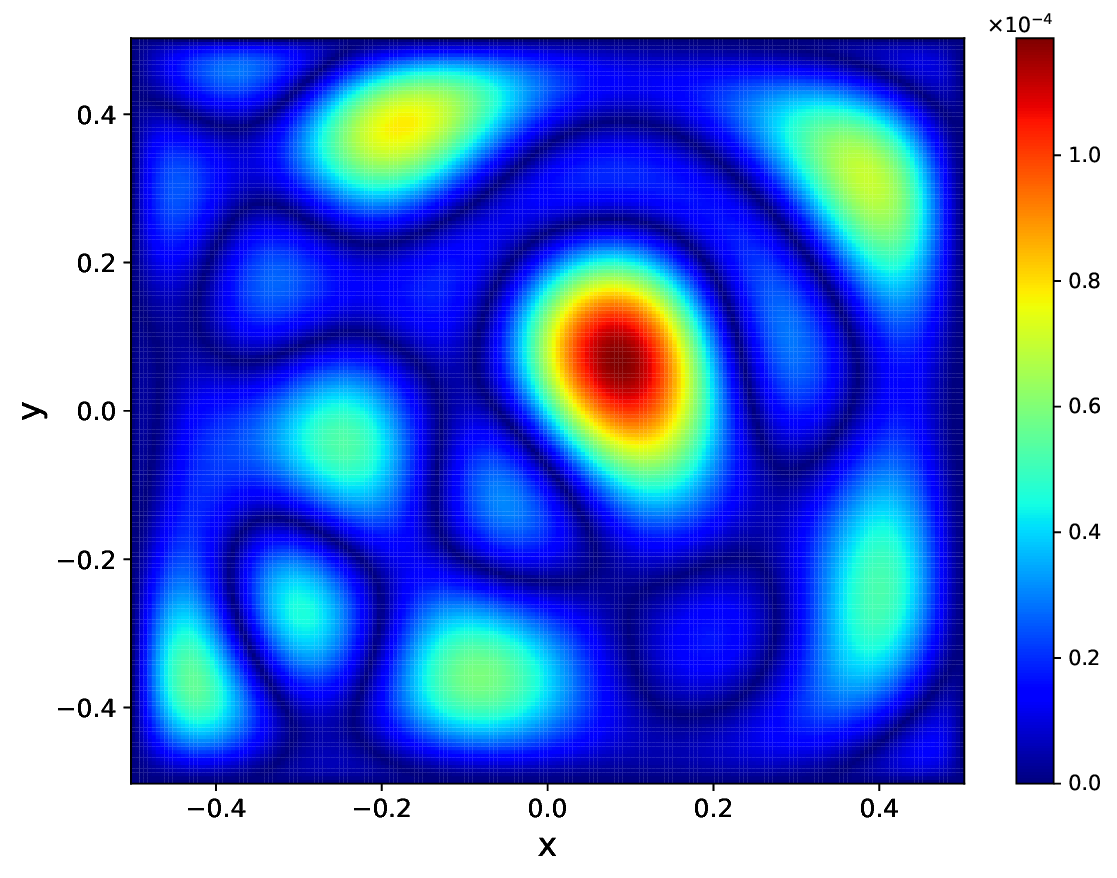}
	\end{minipage}}
	\caption{Point-wise error of PINNs (ARSOF) and PINNs (ARVOS) with $\Delta t=0.1$ for Poisson equation}\label{fig3_3f}
\end{figure}

\subsubsection{Burgers' equation}\label{subsub2c}
Consider the following one-dimensional Burgers' equation:
\begin{equation*}
	\frac{\partial u}{\partial t} + u \frac{\partial u}{\partial x} = \nu \frac{\partial^2 u}{\partial x^2},\quad x\in(0,1),\ t\in(0,1],
\end{equation*}
with periodic boundary condition, where the viscosity $\nu = 0.001$. Here, we learn the operator mapping from the initial condition $u(x,0)=u_0(x)$ to the solution $u(x,t)$ at $t=1$, i.e.,
\begin{equation*}
\mathcal{G}: u_0(x) \mapsto u(x,1).
\end{equation*}

We use the dataset generated in \cite{li2021fourier}, where the initial condition is sampled from a Gaussian random field with a Riesz kernel, denoted by $\mu = \mathcal{R}\big(0, 625(-\Delta + 25I)^{-2}\big)$. Here, $\mu$ is the probability measure on the function space, and $\Delta$ and $I$ represent the Laplacian and the identity, respectively. And we use a spatial resolution with 128 grids to represent the input and output functions. Table~\ref{tab3_5a} demonstrates the mean $L^2$ errors $\|e\|_{L^2}$ for DeepONets (ARSOF), DeepONets (ARVOS), DeepONets (ADAM), DeepONets (NAG), and DeepONets (SGD) across all tested initial learning rates. DeepONets (ARSOF) and DeepONets (ARVOS) consistently display more stable convergence behavior than the competing algorithms. 

While DeepONets (ADAM) yields the lowest training loss as presented in Figure~\ref{fig3_5b}(a), its corresponding validation loss in Figure~\ref{fig3_5b}(b) remains higher. Such a pronounced discrepancy between training and validation performance is a typical signature of overfitting. By contrast, neither DeepONets (ARSOF) nor DeepONets (ARVOS) exhibits overfitting behavior, and both achieve consistently higher solution accuracy than the baseline methods. Figure~\ref{fig3_5c} compares the numerical solutions produced by the five methods with $\Delta t=0.001$ against the reference solution. The solutions generated by DeepONets (ARSOF) and DeepONets (ARVOS) align closely with the ground-truth profile, whereas the results from the other three methods exhibit visible deviations. 


\begin{table}[!t]
	\renewcommand{\arraystretch}{1.1}
	\centering
	\caption{Mean value of $\left\|e\right\|_{L^2}$ using different methods with different initial learning
		rates for Burgers' equation}\label{tab3_5a}
	\begin{tabular}{c c c c c c }
		\hline
		Method $\backslash$ $\Delta t$ & $0.0001$ & $0.001$ & $0.01$  &  $0.1$ & $1$ \\
		\hline
		DeepONets (ARSOF) & $\num{0.0542}$ & $\num{0.0553}$ & $\num{0.0551}$ & $\num{0.0538}$ & $\num{0.0588}$  \\
		\hline
        DeepONets (ARVOS) & $\num{0.0663}$ & $\num{0.0890}$ & $\num{0.0971}$ & $\num{0.0920}$ & $\num{0.1114}$  \\
		\hline
		DeepONets (ADAM) & $\num{0.0525}$ & $\num{0.1871}$ & $\num{2.8584}$ & $\num{37.8654}$ & $\num{1371.8444}$ \\
		\hline
		DeepONets (NAG) & $\num{0.2313}$ & $\num{0.1123}$ & $\num{0.0661}$ & \texttt{NaN} & \texttt{NaN} \\
		\hline
		DeepONets (SGD) & $\num{0.3130}$ & $\num{0.2313}$ & $\num{0.1116}$ & $\num{0.0680}$ & \texttt{NaN} \\
		\hline
	\end{tabular}
\end{table}


\begin{figure}[htbp]
	\centering
	\subfigure[Training loss]{
		\begin{minipage}[c]{0.38\linewidth}
			\centering
			\includegraphics[scale=0.3]{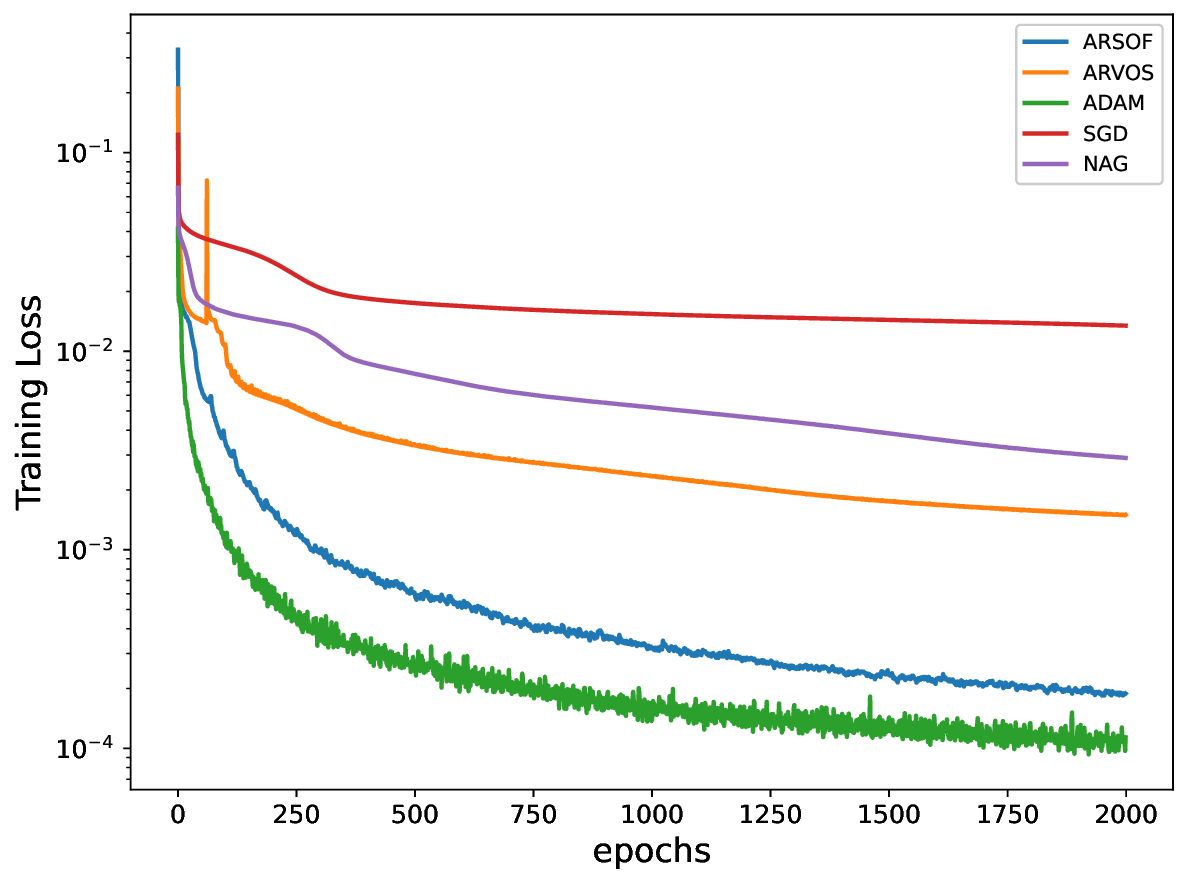}
	\end{minipage}}
	\subfigure[Validation loss]{
		\begin{minipage}[c]{0.38\linewidth}
			\centering
			\includegraphics[scale=0.3]{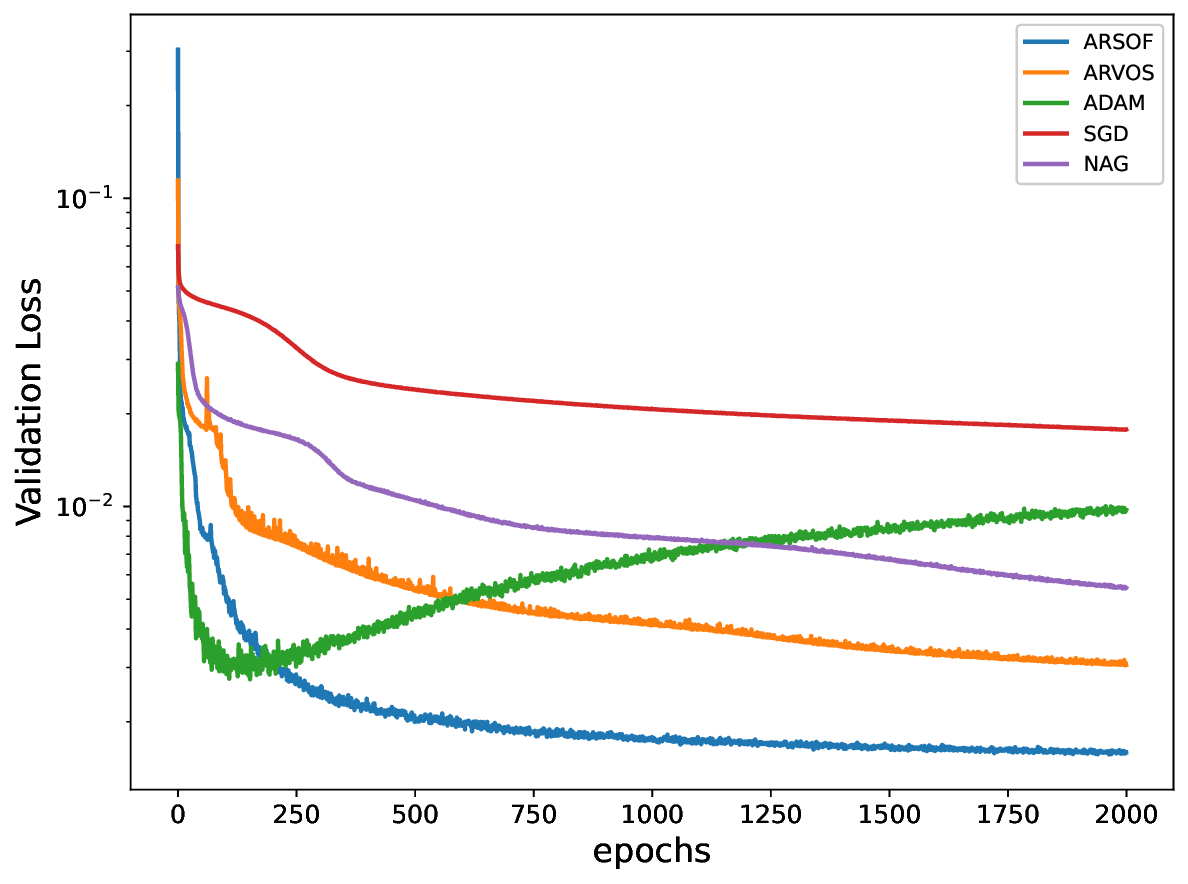}
	\end{minipage}}
	\caption{The loss curves of five methods with $\Delta t=0.001$ for Burgers' equation}\label{fig3_5b}
\end{figure}

\begin{figure}[htbp]
	\centering
	\subfigure[Initial condition]{
		\begin{minipage}[c]{0.38\linewidth}
			\centering
			\includegraphics[scale=0.3]{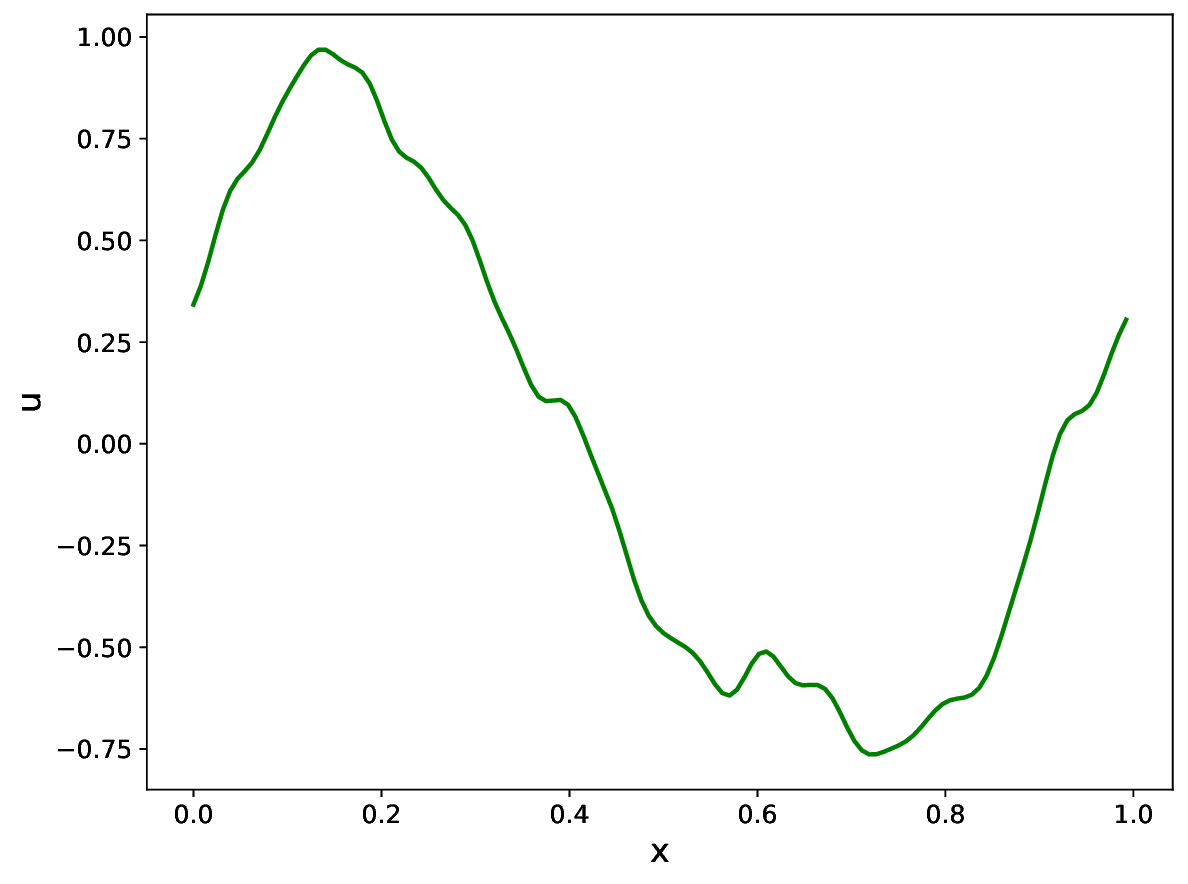}
	\end{minipage}}
	\subfigure[Burgers solution at final time]{
		\begin{minipage}[c]{0.38\linewidth}
			\centering
			\includegraphics[scale=0.3]{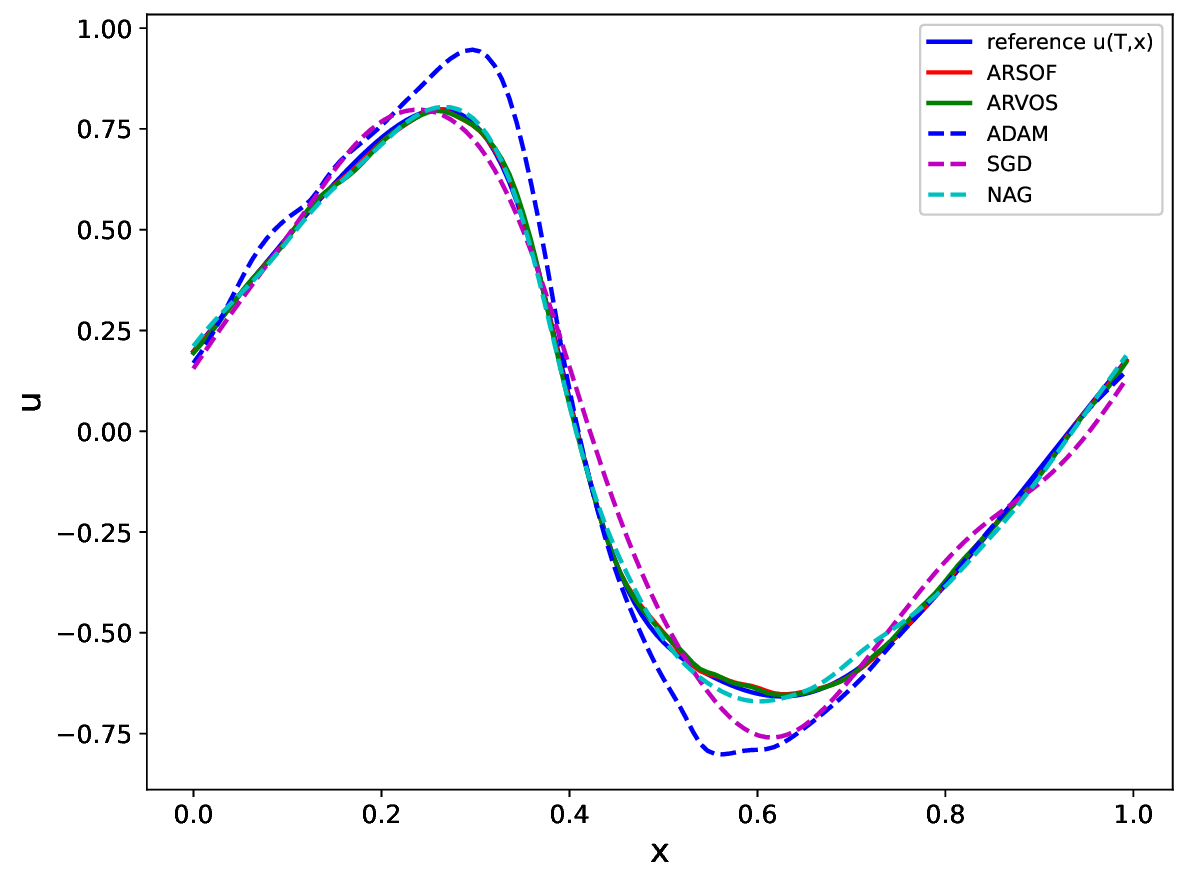}
	\end{minipage}}
	\caption{The numerical solutions of five methods with $\Delta t=0.001$ for Burgers' equation}\label{fig3_5c}
\end{figure}

\section{Concluding remarks}
In this paper, we propose two robust and efficient optimization algorithms based on the second-order flow by using a novel Lagrange multiplier framework and variable and operator splitting approach. The main idea of the proposed methods is to reformulate unconstrained minimization problems as the search for steady-state solutions of the second-order flow. From this perspective, we construct two unconditionally modified pseudo-energy-stable Lagrange multiplier schemes and further develop their relaxed and adaptive variants.
We first apply the proposed methods to the Rosenbrock and Rastrigin functions in order to demonstrate the capability of the relaxed and adaptive algorithms, namely RSOF, RVOS, ARSOF and ARVOS, in finding the global minima of non-convex functions. The proposed approaches are incorporated into the PINNs and DeepONets frameworks as optimizers for solving linear and nonlinear PDEs. Numerical experiments demonstrate that the RSOF, RVOS, ARSOF, and ARVOS algorithms are highly robust with respect to the choice of the initial learning rate, thereby significantly improving the efficiency, robustness, and accuracy of the training process. Moreover, the proposed methods maintain great high accuracy even under large initial learning rates and consistently outperform existing optimization methods, including ADAM, SGD, and NAG.

In the present work, our primary focus is on applying the proposed optimization strategies within the PINNs and DeepONets frameworks. Although satisfactory numerical results have been obtained, the solution accuracy can still be further improved. It is well known that numerous effective techniques have been developed to enhance the accuracy, including gradient-enhanced strategies, adaptive weighting methods, and adaptive sampling techniques. Furthermore, the methods we have proposed still rely solely on first-order information about the loss function, and we have not conducted an in-depth study of the choice of the operator $\mathcal{L}$, which may further limit the accuracy and convergence of our method for solving PDEs. Finally, we have not provided a sufficient theoretical analysis of why VOS-based methods sometimes have an accelerating effect. In future work, we plan to combine the proposed optimization algorithms with some advanced ideas to further improve solution accuracy, extend the applicability of our methods to more challenging PDEs and present further theoretical results.

\section{Declaration of interests}
The authors report no conflict of interest.
\bibliographystyle{siamplain}
\bibliography{Sec-order-flow}

\end{document}